\newcommand\F{{\mathbf{F}}}
\newcommand\OL{\mathcal OL}
\newcommand\CE{{\mathcal E}}
\newcommand\CF{{\mathcal F}}
\newcommand\ep{\epsilon}
\theoremstyle{plain}
  \newtheorem{theorem}[subsection]{Theorem}
  \newtheorem{conjecture}[subsection]{Conjecture}
  \newtheorem{fact}[subsection]{Fact}
  \newtheorem{lemma}[subsection]{Lemma}
  \newtheorem{corollary}[subsection]{Corollary}
\theoremstyle{remark}
  \newtheorem{question}[subsection]{Question}
\theoremstyle{definition}
\begin{document}

\title[A characterization of incomplete sequences in vector spaces]
{A characterization of incomplete sequences in vector spaces}

\author{Hoi H. Nguyen}
\address{Department of Mathematics, Rutgers University, Piscataway, NJ 08854, USA}
\email{hoi@math.rutgers.edu}
\thanks{}

\author{Van H. Vu}
\address{Department of Mathematics, Rutgers University, Piscataway, NJ 08854, USA}
\email{vanvu@math.rutgers.edu}

\thanks{The authors are partially supported by research grants DMS-0901216 and AFOSAR-FA-9550-09-1-0167.}

\maketitle

\begin{abstract}
  A sequence $A$ of elements
  an additive group $G$  is {\it incomplete}  if there exists a group element that {\it can not}  be expressed as a sum of  elements from $A$. The study of incomplete
  sequences is a popular topic in combinatorial number theory. However, the structure of incomplete sequences is still far from being understood, even in basic groups.

  The main goal of this paper is to give a characterization of  incomplete sequences in  the vector space $\F_p^d$, where $d$ is a fixed integer and $p$ is a large prime. As an application, we give a new proof for a recent result by Gao-Ruzsa-Thangadurai on the Olson's constant of $\F_p^2$ and partially answer their conjecture
  concerning $\F_p^3$.

\end{abstract}

\section{Introduction}\label{section:introduction}

\noindent Let $G$ be an additive group and $A$ be a sequence of elements of $G$.
We denote by $S_A$ the collection of subsequence sums of $A$:

$$
S_A= \left \{ \sum_{x\in B}x |B\subset A, 0<|B| < \infty \right \}.
$$

 For a positive integer $m\le |A|$, we denote by $m^*A$ the
collection of partial sums over subsequences of cardinality $m$,

$$
m^{\ast}A := \left \{ \sum_{x\in B}x |B\subset A, |B|=m \right \}.
$$

 If $0\not \in S_A$ (or $0\not \in m^\ast A)$ then we say that $A$ is {\it zero-sum-free} (or $m$-{\it zero-sum-free}).
If $S_A \neq G$ (or $m^\ast A \neq G) $ then we say that $A$ is {\it incomplete}
(or $m$-{\it incomplete}).

 The following questions are among the most popular  in
classical combinatorial number theory.

\begin{question} \label{question:1} When  is $A$ zero-sum-free? When is $A$ incomplete? When is $A$ $m-$zero-sum-free? When is $A$ $m$-incomplete?
\end{question}

  There is a number of results concerning these
questions (see for instance \cite{GG,Ge,Sarbook,Sun, Vu} for surveys) including classical results such as Olson's theorem and the Erd\H{o}s-Ginzburg-Ziv theorem. Our goal
 is to study the above problems for the basic group $\F_p^d$, as $d$ is fixed and $p$ is a large prime. (Here and later $\F_p$ denote the finite field with $p$ elements.)

Our understanding in the case $d=1$  is more or less satisfying, due to the results from \cite{NgSzV, NgV} (see also \cite{Vu} for a survey).
However, the proofs of these results do not extend to higher dimensions. The main difficulty in the extension is the existence of  non-trivial subgroups (subspaces).
In this paper, we develop a new approach that leads to a characterization for incomplete sequences in  $\F_p^d$ for $d\ge 2$.  This approach makes important use of
ideas developed by Alon and Dubiner in \cite{AD}.

In what follows, it is important to distinguish {\it sequence}  (which means multiple set, where an element may have multiplicity greater than one) and  {\it set} or {\it subset} (each element appears exactly once).

Let us start by a  simple observation.
As $S_A = \cup_{1\le m\le |A|} m^\ast A$,  if $A$ is incomplete then it is also $m$-incomplete for every $m\le |A|$. So, we  are going  to consider
  $m$-incomplete  sequences. It is clear that if $A$ belongs to a translate of a proper subspace of $\F_p^d$, then $m^\ast A$ belongs to another translate of that proper subspace, and hence $A$ is $m$-incomplete.

Our leading intuition is that some sort of  converse statement  must hold. Roughly speaking, we expect that  the main reason
for a sequence $A$ in $\F_p^d$ to be  $m$-incomplete is that its elements are contained in few translates of a
proper subspace. In the special case $d=1$, the only proper subspace is $\{0\}$. Thus if $A$ is a $m$-incomplete sequence in $\F_p$, then  it consists of few  elements of high multiplicities (see \cite{NgV} for detailed discussion). We are able to quantify this intuition in the following form.

\begin{theorem}[Characterization for incomplete sequences in $\F_p^d$]\label{theorem:characterization1} For any positive integer $d$ and
positive constants $\alpha, \beta$ there is a positive constant
$\ep$  such that the following holds for every sequence $A$ in $\F_p^d$, where $p$ is a sufficiently large prime. Either there is an $ m \le
\beta p$ such that $m^{\ast} A= \F_p^d$  or $A$ can be partitioned
into disjoint subsequences $A_0, A_1, \dots, A_l$ such that

\begin{itemize}

\item $|A_0| \le \alpha  p ;$

\item $|A_i| = \lfloor \ep p \rfloor, i\ge 1;$

\item there is a subspace $H$ such that each $A_i$ ($1\le i\le
l$) is contained in a translate of $H$ and  $m^{\ast} {A_0 }$
contains a translate of $H$ for some $1\le m \le |A_0|$.

\end{itemize}

\end{theorem}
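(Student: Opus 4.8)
The plan is to argue by a compactness/induction on dimension $d$, with the key engine being an iterative extraction of "large low-dimensional pieces." First I would handle the base case $d=1$, which is exactly the situation treated in \cite{NgV}: a sequence in $\F_p$ that is $m$-incomplete for all $m\le\beta p$ must consist of few elements of high multiplicity, so after removing a bounded-size $A_0$ the remainder splits into blocks $A_i$ each a constant-multiple of a single value, i.e. contained in a translate of $H=\{0\}$; here $m^\ast A_0$ contains $0$ for suitable $m$ essentially because $A_0$ is long enough and one-dimensional. For the inductive step in $\F_p^d$, the dichotomy is driven by the following alternative: either the elements of $A$ are "spread out" enough to fill the whole space via $m$-fold sums for some $m\le\beta p$, or they concentrate near translates of a proper subspace.

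The heart of the matter is a quantitative unfolding of that concentration, and this is where I expect the main difficulty. The idea, following the Alon--Dubiner circle of ideas from \cite{AD}, is: given a long sequence $A$, one first finds many pairwise-disjoint subsequences each of size $\lfloor\ep p\rfloor$ such that each sits (up to translation) inside a proper subspace $H$—if one cannot even do this, then $A$ must already be rich enough (its elements not trapped in few cosets of any hyperplane) that a Davenport-constant / Cauchy--Davenport-type argument in $\F_p^d$ pushes $m^\ast A$ to everything for some $m\le\beta p$. Having extracted blocks $A_1,\dots,A_l$ inside cosets of $H$, the leftover material $A_0$ is what remains; the subtle point is to guarantee simultaneously that $|A_0|\le\alpha p$ \emph{and} that $m^\ast A_0$ contains a full translate of $H$ for some $m\le|A_0|$. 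I would obtain the latter by projecting modulo $H$: the images of the blocks and of $A_0$ live in $\F_p^d/H\cong\F_p^{d'}$ with $d'<d$, and one applies the inductive statement there. The recursion has to be set up so that the $\ep$ for dimension $d$ is chosen much smaller than the $\ep$ returned by the induction for dimension $d'$, and so that $\alpha,\beta$ at level $d$ translate into valid parameters at level $d'$; bookkeeping the constants through the recursion—ensuring $l\cdot\lfloor\ep p\rfloor + |A_0|=|A|$ and that no block is "wasted"—is the routine but delicate part.

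More concretely, the extraction step I would run is: repeatedly pull out a maximal collection of disjoint size-$\lfloor\ep p\rfloor$ subsequences each contained in some hyperplane coset, recording for each the hyperplane used; by pigeonhole among the finitely many "directions" (after absorbing a bounded error, since there are more than a bounded number of hyperplanes but the relevant combinatorial type is controlled), pass to a sub-collection using a common $H$. What is left, together with the discarded short tails, forms a candidate $A_0$. One then checks: if $|A_0|>\alpha p$, the residual sequence is itself long and, crucially, \emph{not} concentrable into cosets of any proper subspace at the $\ep$-scale, so by the Alon--Dubiner machinery $m^\ast A_0=\F_p^d$ for some small $m$—but that is the first horn of the theorem's dichotomy, applied to $A$ itself (since $m^\ast A_0\subseteq m^\ast A$ after padding with zero-sum blocks, using that each $A_i$ contributes a coset translate and these can be combined to realize any target once $H$-cosets are filled). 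Otherwise $|A_0|\le\alpha p$, the $A_i$ are as required, and the $H$-translate-containment for $m^\ast A_0$ comes from applying the already-established lower-dimensional case to the $H$-component of $A_0$ (which is long, being all of $A$ minus bounded-plus-$l$ blocks) combined with the observation that inside a fixed coset direction the $A_i$'s $H$-parts are genuine sequences in $H$ of size $\lfloor\ep p\rfloor$ that can be rolled into $A_0$. The main obstacle, to repeat, is making the quantitative version of "not concentrable $\Rightarrow$ complete" sharp enough—with the right dependence of $m$ on $\beta$—to feed the induction; this is precisely the step where the ideas of Alon and Dubiner, adapted from $\F_p$ to $\F_p^d$ with subspaces present, must be pushed through.
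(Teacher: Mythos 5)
Your high-level frame --- induction on $d$ with the $d=1$ case imported from \cite{NgV}, the Alon--Dubiner dichotomy ``either $m^\ast A=\F_p^d$ for some $m\le\beta p$ or a hyperplane coset is $\ep p$-rich,'' and the iterative extraction of size-$\lfloor\ep p\rfloor$ blocks lying in hyperplane cosets --- matches the paper. But the two steps you flag as ``routine but delicate'' are exactly where your plan breaks, and the paper resolves them by different mechanisms. First, you propose to pass to a \emph{common} subspace $H$ for all blocks ``by pigeonhole among the finitely many directions.'' There is no such pigeonhole: $\F_p^d$ has on the order of $p^{d-1}$ hyperplane directions, so $h=O(1/\ep)$ blocks can all lie in pairwise non-parallel hyperplanes $D_1,\dots,D_h$ with no two sharing a direction. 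The paper's way around this is a second dichotomy on the blocks $B_i$: if some $m^\ast B_i$ already contains a full translate of its hyperplane $D_i$, one quotients by the direction of $D_i$, observes (via Fact \ref{fact:dimensionincrement}) that the projected sequence in $\F_p$ must be incomplete, and invokes Theorem \ref{theorem:classification:d=1} to force all but $O(p^{12/13})+o(p)$ of $A$ into $M=O_{\ep_0}(1)$ cosets \emph{parallel to} $D_i$ --- the common direction is extracted from incompleteness of the quotient, not from counting directions. If no $B_i$ does this, one recurses into each $(d-1)$-dimensional $D_i$ separately and takes $H:=H_1+\cdots+H_h$; note that in this branch the blocks never share a hyperplane, and the final $H$ is typically a much smaller subspace.

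Second, your mechanism for ``$m^\ast A_0$ contains a translate of $H$'' --- applying the lower-dimensional theorem to the $H$-component of $A_0$ --- does not work: $A_0$ is the small leftover of size $\le\alpha p$, and nothing guarantees its intersection with any single coset of $H$ is large enough (or structured enough) to be complete inside $H$. In the paper this containment is not derived from $A_0$ being generic; it is built in by construction. In the first branch $A_0$ is defined to \emph{contain} the distinguished block $B_1$ with $m^\ast B_1\supseteq v+D_1$, and in the second branch $A_0$ contains the recursively produced cores $B_{i0}$ whose partial sums are added together via Fact \ref{fact:dimensionincrement} to cover a translate of $H_1+\cdots+H_h$. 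Without these two devices --- the sub-dichotomy on whether some block already fills its hyperplane, and the explicit placement into $A_0$ of the pieces that generate $H$ --- the induction cannot be closed, so as written the proposal has a genuine gap at the heart of the argument rather than merely unfinished bookkeeping.
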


The set $A_0$ is often viewed as the set of exceptional elements. Note that if $H=\{0\}$, then each $A_i$ consists of only one element of multiplicity $\lfloor \ep p \rfloor$.










Theorem \ref{theorem:characterization1} seem to be  applicable for  various  additive problems in vector spaces.  In the rest of this section, we discuss a few examples.

\vskip2mm

{\bf Olson's constant.} Given an additive group $G$, let $\OL(G)$, the {\it Olson constant} of $G$, be the smallest integer such that no subset of $G$ of cardinality $\OL(G)$ is zero-sum-free.
 $\OL(G)$ is a parameter of principal interest and its determination has  a long history.  Erd\H{o}s, Graham and Heilbronn (\cite{EG}) conjectured that $\OL(G)\le \sqrt{2|G|}$.  Szemer\'edi (\cite{Sz}) showed that there exists an absolute constant $c$ such that $\OL(G)\le c\sqrt{|G|}$. Olson (\cite{O1},\cite{O2}) then improved it to $\OL(G)\le 2\sqrt{|G|}$. The current record is due to Hamidoune and
Z\'emor (\cite{HZ}) who showed that $\OL(G)\le \sqrt{2|G|}+O(|G|^{1/3}\log{|G|})$.

Regarding the group $\F_p$, Hamidoune and Z\'emor proved that $\OL(\F_p)\le \lceil \sqrt{2p}+5\log{p}\rceil$. They conjectured that the additional $\log$ term is not necessary, and this has been recently settled by the current authors with Szemer\'edi (also due to Deshouillers and Prakash, see \cite{NgSzV}).
These results give the exact value of $\OL (\F_p)$.  Recently, Gao, Ruzsa and Thangadurai \cite{GRT} showed that $\OL(\F_p^2) = p+\OL(\F_p)-1$.


Our first application is the following strengthening of Gao-Ruzsa-Thangadurai result.

\begin{theorem}[Description of optimal zero-sum-free sets in $\F_p^2$] \label{theorem:Olson2}
Suppose that $A$ is a zero-sum-free set of cardinality
$p+\OL(\F_p)-2$ in $\F_p^2$, where $p$ is a sufficiently large prime.
Then there is a linear full rank map $\Phi$ such that one of the following holds.

\begin{itemize}

\item $\Phi(A)$ contains $\OL(\F_p)-1$ points on the line $x=0$ and $p-1$ points on
the line $x=1$.

\vskip .1in

\item $\Phi(A)$ contains $\OL(\F_p)-1$ points on the line $x=0$, $p-2$ points on
the line $x=1$, and one point on the line $x=2$.

\end{itemize}

\end{theorem}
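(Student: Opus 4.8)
The plan is to apply Theorem \ref{theorem:characterization1} to the zero-sum-free set $A$ with $m$ ranging over all values $1 \le m \le |A|$ and extract from the resulting partition the rigid line structure claimed. Since $|A| = p + \OL(\F_p) - 2 \approx p$, we may pick constants: set $\beta$ slightly larger than $1$ so that the ``$m^\ast A = \F_p^2$ for some $m \le \beta p$'' alternative would immediately contradict zero-sum-freeness (indeed, if $m^\ast A = \F_p^2$ then $0 \in m^\ast A \subset S_A$), and choose $\alpha$ small (to be calibrated against $\OL(\F_p)/p$, which is roughly $\sqrt{2/p}$, so any fixed $\alpha$ works for large $p$). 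Theorem \ref{theorem:characterization1} then yields a partition $A = A_0 \cup A_1 \cup \dots \cup A_l$ with $|A_0| \le \alpha p$, each $|A_i| = \lfloor \ep p \rfloor$ for $i \ge 1$, a subspace $H$ with each $A_i$ ($i \ge 1$) in a translate of $H$, and $m^\ast A_0$ containing a translate of $H$ for some $m \le |A_0|$.

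Next I would argue $\dim H = 1$. If $\dim H = 2$ then $H = \F_p^2$, and $m^\ast A_0$ containing a translate of $H = \F_p^2$ forces $0 \in m^\ast A_0 \subset S_A$, contradicting zero-sum-freeness. If $\dim H = 0$, i.e.\ $H = \{0\}$, then each $A_i$ ($i \ge 1$) is a single element repeated $\lfloor \ep p\rfloor \ge 2$ times, which is impossible for a \emph{set}. Hence $\dim H = 1$: $H$ is a line through the origin, and after applying a full-rank linear map $\Phi$ we may assume $H = \{x = 0\}$, so each $A_i$ ($i \ge 1$) lies on a vertical line $x = c_i$. Counting: each vertical line contains at most $p$ distinct points, so $|A_i| = \lfloor \ep p \rfloor \le p$ is consistent, but since $A$ is a \emph{set} the lines $x = c_i$ for distinct blocks $A_i$ that happen to share a value would have to be disjoint on that line; more usefully, the total number of distinct vertical lines used is at least $\lceil (|A| - |A_0|)/p \rceil$, and since $|A| - |A_0| \ge p + \OL(\F_p) - 2 - \alpha p$, for small $\alpha$ this is $> p - p = $ close to $p$, in any case forcing that the elements of $A$ outside $A_0$ meet essentially all of at least... — here I need to be careful. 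The cleaner route: the cardinality $|A| = p + \OL(\F_p) - 2$ is \emph{just barely} too large to fit on $p-1$ vertical lines with at most... no — I would instead show directly that $A$ must occupy exactly one ``full'' vertical line plus a bounded remainder.

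The heart of the argument is the final extremal analysis, and I expect \emph{this} to be the main obstacle. After the reduction, the picture is: the bulk of $A$ (all but $|A_0| \le \alpha p$ points) sits on a bounded number of vertical lines $x = c_1, \dots, x = c_k$ (bounded because $l \cdot \lfloor \ep p\rfloor \le |A| \le 2p$ gives $l \le 2/\ep + 1$, and distinct $c_i$ count at most $l$), while $A_0$ is a small set whose $m$-fold sumset contains a whole vertical line $H' = \{x = 0\}$ translate, i.e.\ $m^\ast A_0 \supseteq \{x = c_0\}$ for some $c_0$. I then want to transfer: for $A$ to be zero-sum-free, one derives a strong restriction — morally, if $A$ had points on two vertical lines $x = c_i \ne x = c_j$ each carrying many points, then combining a subsequence of $A_0$ summing across the full line $x = c_0$ with suitable selections from the $A_i$'s and the lines, one can hit $0$. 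Pushing this through should force: all of $A$ lies on at most two vertical lines $x = 0$ and $x = 1$ after normalization (the line $x = 0$ carrying the ``exceptional'' part of size $\le \OL(\F_p) - 1$, forced to be an optimal zero-sum-free subset of the line $\F_p$ hence of size exactly $\OL(\F_p)-1$ by the $d=1$ theory), with the residual point-counts on $x = 1$ (and possibly one point on $x=2$) pinned down by the exact cardinality $p + \OL(\F_p) - 2$ and a careful case analysis of which configurations on $\F_p$-lines remain zero-sum-free. The delicate points are (i) controlling the interaction between $A_0$ and the lines so that no zero-sum appears, which is where the Alon–Dubiner-style ideas underlying Theorem \ref{theorem:characterization1} must be invoked a second time at the level of a single line, and (ii) the exact arithmetic that distinguishes the two listed cases — essentially, that a zero-sum-free subset of $\F_p$ of near-maximal size is an arithmetic-progression-like set, so the $p-1$ points on $x=1$ must be \emph{all but one} residue, and the leftover point is forced either onto $x=1$ (filling it to $p-2$ is wrong — it should be $p-1$) or, in the second configuration, onto $x = 2$. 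Making this final counting rigorous, rather than heuristic, is the crux.
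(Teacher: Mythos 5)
Your proposal gets the opening right --- applying Theorem \ref{theorem:characterization1} with the ``$m^\ast A=\F_p^2$'' alternative ruled out by zero-sum-freeness, and the deduction that $\dim H=1$ (excluding $\dim H=2$ via $0\in m^\ast A_0$ and $\dim H=0$ because $A$ is a set) --- and this matches the paper's starting point. But everything after that, which is the actual content of the theorem, is left as a heuristic sketch, and you say so yourself (``Making this final counting rigorous \dots is the crux''). The concrete missing ideas are the following. First, the paper does not reason about ``a bounded number of vertical lines'' directly; it projects $A\setminus A_0$ onto a complementary line $\mathcal{L}'$ to get a sequence $B$ in $\F_p$, observes that $B$ must be \emph{incomplete} (by Fact \ref{fact:dimensionincrement}, since $m^\ast A_0$ already contains a full vertical line), and then invokes the one-dimensional classification, Theorem \ref{theorem:classification:d=1}. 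That theorem is what forces, after a dilation in the $x$-direction, at least $(1-2\alpha)p$ elements of $A\setminus A_0$ onto the two lines $x=\pm1$ --- this is the step that creates the rigid line structure, and your sketch never identifies it. (Your only appeal to the $d=1$ theory is to the contents of the line $x=0$, which is not where it is needed.)

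Second, the endgame requires the Erd\H{o}s--Heilbronn-type bound $|m^\ast A|\ge\min\{p,m|A|-m^2+1\}$ (Theorem \ref{theorem:ErdosHeilbronn}): once $|A_1|\ge(1-3\alpha)p$, it shows $l^\ast A_1$ covers the entire line $x=l$ for all $2\le l\le|A_1|-2$, which is what eliminates $A_{-1}$, confines the remaining projections $x_i$ to $\{2,\dots,p+1-|A_1|\}$ with $\sum_i x_i\le p-|A_1|+1$ as integers, and hence pins $|A'|\le 2$; the exact cardinality $p+\OL(\F_p)-2$ then yields the two listed configurations by a short case analysis. None of this appears in your proposal, and your intermediate counting attempt (``the total number of distinct vertical lines used is at least \dots close to $p$'') is both abandoned and numerically wrong, since $(|A|-|A_0|)/p$ is close to $1$, not $p$. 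As written, the proposal is a correct setup followed by a statement of what ought to be true, not a proof.
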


Theorem \ref{theorem:Olson2} not only reproves the bound $p + \OL(\F_p) -2$, but also characterizes all extremal sets. The proof  is  short and simple; furthermore, it also classifies zero-sum-free sets of size $\ge \ep p$, for any constant $\ep >0$, but we do not elaborate on this point.

It is not hard to see that there exists in $\F_p^d$ a zero-sum-free set of size $p+\OL(\F_p^{d-1})-2$. (One can see this by taking the union of a zero-sum-free set of size $\OL(\F_p^{d-1})-1$ on the hyperplane $x_d=0$ and $p-1$ arbitrary points of the plane $x_d=1$.) We thus have $\OL(\F_p^{d}) \ge p+\OL(\F_p^{d-1})-1$.
Gao, Ruzsa and Thangadurai \cite{GRT}  made the following conjecture.

\begin{conjecture}[Precise value of Olson's constant for $\F_p^d$] For any fixed $d$ and sufficiently large $p$,
$\OL(\F_p^{d}) = p+\OL(\F_p^{d-1})-1$.
\end{conjecture}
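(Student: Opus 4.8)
The plan is to supply the missing upper bound, since the matching lower bound $\OL(\F_p^{d})\ge p+\OL(\F_p^{d-1})-1$ is already in the excerpt. We induct on $d$, the base case $d=2$ being the theorem of Gao--Ruzsa--Thangadurai quoted above. So assume $d\ge3$ and that the conjecture holds in dimension $d-1$; it then suffices to show that no \emph{set} $A\subseteq\F_p^d$ with $|A|=p+\OL(\F_p^{d-1})-1$ is zero-sum-free. Suppose one existed. Then $0\notin S_A$, so $A$ is incomplete and hence $m$-incomplete for every $m\le|A|$, and Theorem~\ref{theorem:characterization1} applies with, say, $\alpha=\tfrac12$ and $\beta$ a small absolute constant: it produces an $\ep>0$, a partition $A=A_0\sqcup A_1\sqcup\dots\sqcup A_l$, and a subspace $H$ with the three listed properties. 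Since each $A_i$ ($i\ge1$) is an honest $\lfloor\ep p\rfloor$-element set lying in a single translate of $H$, we have $\dim H\ge1$; and since $0\in m^{\ast}A_0\subseteq S_A$ is impossible, $H\ne\F_p^d$, so $1\le\dim H\le d-1$. By the inductive hypothesis $|A|=(d-1)p+O(\sqrt p)$, so from $l\lfloor\ep p\rfloor=|A|-|A_0|\ge(1-\alpha)p+\OL(\F_p^{d-1})-1$ we see $l$ is of order $1/\ep$.

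Now pass to the quotient $\pi\colon\F_p^d\to V:=\F_p^d/H$, of dimension $d':=d-\dim H\ge1$. For $1\le i\le l$ let $\bar v_i:=\pi(A_i)\in V$ (a single point, since $A_i$ lies in one coset of $H$), let $w+H\subseteq m_0^{\ast}A_0$ be the coset furnished by the third bullet, and set $\bar w:=\pi(w)$. First suppose $\bar H':=\Span\{\bar v_1,\dots,\bar v_l\}$ is a proper subspace of $V$. Put $H':=\pi^{-1}(\bar H')$, so $H\subseteq H'$ and $\dim H'=\dim H+\dim\bar H'<d$. Every $A_i$ with $i\ge1$ lies in $\pi^{-1}(\bar v_i)\subseteq H'$, hence $|A\cap H'|\ge\sum_{i\ge1}|A_i|=|A|-|A_0|\ge(1-\alpha)p+\OL(\F_p^{d-1})-1$. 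But $A\cap H'$ is zero-sum-free in $H'\cong\F_p^{\dim H'}$, so $|A\cap H'|\le\OL(\F_p^{\dim H'})-1\le\OL(\F_p^{d-1})-1$, the last step because a zero-sum-free subset of $\F_p^{\dim H'}$ embeds in $\F_p^{d-1}$. Since $\alpha<1$ this is a contradiction.

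It remains to handle the case where the $\bar v_i$ span $V$; this is the heart of the matter. Here the aim is to show that the partial-sum set $\Sigma:=\sum_{i=1}^{l}\{0,\bar v_i,2\bar v_i,\dots,\lfloor\ep p\rfloor\bar v_i\}$ is all of $V$. Granting this, choose $m_i\le\lfloor\ep p\rfloor$ and $B_i\subseteq A_i$ with $|B_i|=m_i$ and $\sum_i m_i\bar v_i=-\bar w$; then $s:=\sum_i\sum_{x\in B_i}x$ satisfies $\pi(s)=-\bar w$, so $h:=-(s+w)\in H$, and since $w+H\subseteq m_0^{\ast}A_0$ there is a $B_0\subseteq A_0$ with $|B_0|=m_0\ge1$ and $\sum_{x\in B_0}x=w+h=-s$; consequently $B_0\cup\bigcup_i B_i$ is a nonempty subsequence of $A$ summing to $0$, contradicting $0\notin S_A$. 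To establish $\Sigma=V$, group the $\bar v_i$ by direction: if some $d'$ of these directions span $V$ and each occurs at least $\lceil1/\ep\rceil+1$ times, then in each such direction the sum of that many arithmetic progressions of length $\lfloor\ep p\rfloor+1$ already exhausts the whole line (Cauchy--Davenport in $\F_p$), and summing over the $d'$ directions exhausts $V$. Otherwise the directions occurring at least $\lceil1/\ep\rceil+1$ times span a proper subspace $\bar K\subsetneq V$; one then discards the $A_i$ attached to the remaining (``light'') directions and argues---via the bounds on $l$ and a sharper sumset estimate in $V$---that either too little of $A$ is lost, so the previous paragraph with $\bar K$ in place of $\bar H'$ finishes, or the light directions already span $V$ and a Kneser-type estimate still delivers $\Sigma=V$.

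I expect this last branch to be the main obstacle: one must make quantitative the competition between the mass absorbed by the light directions and the slack $(1-\alpha)p$ that the argument can afford to discard, and then ensure the reduction terminates. For $d=3$ the quotient $V$ has dimension at most $2$, so there are at most two lines to fill; with $\alpha$ close to $1$ and $\ep$ correspondingly small the accounting closes, and a leftover $H$-component can be absorbed by observing (Davenport constant of $\F_p$) that a sequence of at least $p$ nonzero elements of $\F_p$ has a nonempty zero-sum subsequence, allowing cancellation against the subsequence sums of $A$ inside a suitable codimension-one subspace. For general $d$ the number of lines one must fill grows with $d$ while the available slack does not, which is precisely why the conjecture remains open beyond $d=3$.
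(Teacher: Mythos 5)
You are attempting to prove a statement that the paper itself leaves open: this is a \emph{conjecture} of Gao--Ruzsa--Thangadurai, the authors say explicitly that (prior to the paper) there had been no progress on it and that even for the asymptotic version ``the technical details still elude us'' beyond $d=3$. The paper proves only the exact $d=2$ case (Theorem \ref{theorem:Olson2}) and the asymptotic bound $\OL(\F_p^3)\le (2+\gamma)p$ (Theorem \ref{theorem:Olson3}). So there is no proof in the paper to compare against; your proposal would, if complete, be a new theorem. It is not complete, and the gap is larger than the one you flag.

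Two concrete problems. First, the branch in which the heavy directions span only a proper subspace genuinely fails: nothing prevents \emph{every} direction from being light (all $\bar v_i$ distinct, say), in which case discarding the light blocks discards all of $A\setminus A_0$; and a sum of $O(1/\ep)$ arithmetic progressions of length $\lfloor\ep p\rfloor$ pointing in $O(1/\ep)$ distinct directions of $V$ need not cover $V$ once $\dim V\ge 2$, so no Kneser-type estimate delivers $\Sigma=V$ there. The paper's own device for exactly this situation (Case 2 of the proof of Theorem \ref{theorem:Olson3}) is different: it does not try to cover $V$, but applies Olson's Davenport-constant bound to the weighted sequence of the $\bar v_i$ to produce a single relation $\sum_i m_i\bar v_i=0$ with some $m_i$ in the Erd\H{o}s--Heilbronn range. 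That requires total multiplicity at least $d'(p-1)+1$ surviving in the quotient; with $|A|=(d-1)p+O(\sqrt p)$, $\dim H=1$, $d'=d-1$, and up to $\alpha p$ elements surrendered to $A_0$ (plus the $\lfloor 2/\ep\rfloor$ trimming per block), the count falls short --- which is precisely why the paper assumes the extra slack $\gamma p$ and obtains only the asymptotic statement. Second, even where your argument closes, it can only yield $\OL(\F_p^d)\le (d-1+o(1))p$: Theorem \ref{theorem:characterization1} gives up control of $\alpha p$ elements in $A_0$, whereas the conjecture demands the constant exactly, i.e.\ precision $O(1)$ beyond $p+\OL(\F_p^{d-1})$. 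Recovering that precision is a separate bootstrapping argument --- in the $d=2$ case it occupies all of Section \ref{section:Olson2}, via projection to $\F_p$, Theorem \ref{theorem:classification:d=1}, and repeated Erd\H{o}s--Heilbronn covering --- and nothing analogous appears in your sketch.
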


The assumption that  $p$ is sufficiently large is necessary,  see \cite{GG}.

Since $\OL (\F_p ) = O(\sqrt p) =o(p)$, this conjecture would imply the following asymptotic version.

\begin{conjecture} \label{conj2}  For any fixed $d \ge 2$ and $\gamma >0$, the following holds for all sufficiently large $p$:
$\OL(\F_p^{d}) \le  (d-1 + \gamma ) p.$ (Notice that the lower bound $\OL (\F_p^d ) \ge (d-1-\gamma) p $ is obvious.)
\end{conjecture}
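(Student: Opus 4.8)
The plan is to derive Conjecture \ref{conj2} from Theorem \ref{theorem:characterization1} by contraposition. Fix $d \ge 2$ and $\gamma > 0$; we want to show that any set $A \subseteq \F_p^d$ with $|A| \ge (d-1+\gamma)p$ is \emph{not} zero-sum-free, i.e. $0 \in S_A$, for all sufficiently large $p$. The strategy is: apply the characterization with carefully chosen parameters $\alpha, \beta$ (depending only on $d$ and $\gamma$), obtain either a full covering $m^\ast A = \F_p^d$ (which immediately gives $0 \in S_A$, since $0 \in \F_p^d$), or the structural decomposition $A = A_0 \sqcup A_1 \sqcup \cdots \sqcup A_l$ with a subspace $H$; then exploit the structure to build a zero subsequence sum explicitly. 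A convenient choice is $\alpha$ small (say $\alpha = \gamma/4$) and $\beta = 1$ (we only need \emph{some} $m \le p$, and in $\F_p^d$ every coordinate of a subsequence sum is a sum of at most $|A|$ field elements, but a zero sum can always be trimmed, so the size restriction on $m$ costs nothing here).

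First I would handle the easy branch: if $m^\ast A = \F_p^d$ for some $m \le \beta p$, then in particular $0 \in m^\ast A \subseteq S_A$, so $A$ is not zero-sum-free and we are done. Next, suppose we are in the structural branch. Since $|A_i| = \lfloor \ep p \rfloor$ for $i \ge 1$ and $|A_0| \le \alpha p$, the number of blocks satisfies $l \ge (|A| - \alpha p)/(\ep p) \ge (d-1+\gamma - \alpha)/\ep$, which is $\ge (d-1+\gamma/2)/\ep$ by our choice of $\alpha$. Let $r = \dim H$; note $r \le d-1$ because each $A_i$ lies in a translate of $H$ and (if $H = \F_p^d$) a single translate would be all of $\F_p^d$, forcing $m^\ast A_i$ — hence $S_A$ — to contain everything; so we may assume $r \le d-1$ and work in the quotient $V = \F_p^d/H \cong \F_p^{d-r}$ with $\dim V = d - r \ge 1$. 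Write $\bar A_i$ for the image of $A_i$ in $V$; each $\bar A_i$ is a single point $v_i \in V$ with multiplicity $\lfloor \ep p \rfloor$.

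The heart of the argument — and the step I expect to be the main obstacle — is producing a zero-sum subsequence from this quotient picture together with the hypothesis $m^\ast A_0$ contains a translate of $H$. The idea: in the quotient $V$, we have $l$ "fat" points $v_1, \dots, v_l$ of multiplicity $\lfloor \ep p \rfloor$ each, with $l$ comfortably larger than $(\dim V) \cdot (1/\ep)$ up to the $\gamma$ slack; by a pigeonhole/Davenport-constant style argument in $\F_p^{d-r}$ (each coordinate lives in $\F_p$, and with $\ge (d-r)p + (\text{slack})$ available summands spread over few points one can hit any target residue, in particular the image of a chosen translate of $H$), we can select a subsequence $B \subseteq A_1 \cup \cdots \cup A_l$ whose sum lies in a prescribed coset $w + H$. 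We then choose $w$ so that $-w + H$ is exactly a translate of $H$ contained in $m^\ast A_0$ for some $m \le |A_0|$; picking the matching element of $m^\ast A_0$ gives a subsequence $C \subseteq A_0$ with $\sum C \equiv -\sum B \pmod H$, so $\sum B + \sum C \in H$. Finally, since within each fat block $A_i$ we have $\lfloor \ep p \rfloor$ copies of a \emph{fixed} vector lying in a translate of $H$, we have full freedom to adjust $\sum B$ by elements of $H$ (by swapping which copies we include, or by the structure of $m^\ast$ restricted to one translate), and likewise for $A_0$; balancing these degrees of freedom against $\dim H = r$ dimensions to kill, we can force $\sum B + \sum C = 0$ exactly. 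The delicate points are (i) making the quotient pigeonhole precise enough that the slack $\gamma p$ suffices even after accounting for the $\alpha p$ exceptional elements and the $\lfloor \ep p\rfloor$ rounding, and (ii) carefully coordinating the "mod $H$" choice in $A_1 \cup \cdots \cup A_l$ with the in-$H$ correction so that the two together produce an exact zero; this coordination is where one must be careful that $\ep$ (which Theorem \ref{theorem:characterization1} only guarantees to \emph{exist}, and which may be tiny) is still large enough to give $l > (d-r)/\ep$ with room to spare, which holds because $|A| \ge (d-1+\gamma)p$ and $d - r \le d - 1$.
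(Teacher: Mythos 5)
The statement you are addressing is stated in the paper as a \emph{conjecture}: the authors prove only the case $d=3$ (Theorem \ref{theorem:Olson3}) and explicitly write that the general case ``still eludes'' them. So a correct proof here would go beyond the paper. Your reduction to Theorem \ref{theorem:characterization1} and the easy branch ($m^\ast A=\F_p^d$ gives $0\in S_A$) are fine and match the strategy of the $d=3$ proof, but the step you yourself flag as ``the heart of the argument'' contains two genuine gaps, and they are exactly the obstructions the authors could not overcome.

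First, the claim that a ``pigeonhole/Davenport-constant style argument'' lets you choose $B\subseteq A_1\cup\dots\cup A_l$ whose sum lies in a \emph{prescribed} coset $w+H$ is unjustified. The image of $A_1\cup\dots\cup A_l$ in $V=\F_p^d/H$ is supported on at most $l=O_{\ep}(1)$ points $v_1,\dots,v_l$; such a sequence can have all its subsequence sums confined to a proper subgroup (or coset) of $V$ --- for instance all $v_i$ on a line through $0$ --- so it need not reach the particular coset $-u+H$ dictated by where $m^\ast A_0$ happens to sit. Olson's theorem (Theorem \ref{theorem:Davenportconstant}) only produces a \emph{zero}-sum subsequence, and even that requires total length at least $(d-r)(p-1)+1$ with $r=\dim H$; since you have only about $(d-1+\gamma)p$ elements this forces $r\ge 1$, and you never exclude $H=\{0\}$ (it is excluded because $A$ is a set and so cannot contain a point of multiplicity $\lfloor\ep p\rfloor$ --- this is how the paper argues for $d=3$). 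Second, the asserted ``full freedom to adjust $\sum B$ by elements of $H$ by swapping copies'' is false once $\dim H\ge 2$: a block $A_i$ consists of $\lfloor\ep p\rfloor$ \emph{distinct} points of a translate of $H$ (only their images in $V$ coincide), and these points may all lie on a line inside that translate, so $m^\ast A_i$ need not come close to covering a translate of $H$. The paper's $d=3$ proof succeeds precisely because only $r=1$ and $r=2$ occur: for $r=1$ it extracts a full translate of the line $H$ from a single block via the Erd\H{o}s--Heilbronn inequality (Theorem \ref{theorem:ErdosHeilbronn}), a strictly one-dimensional tool (and there it does not even use the translate inside $m^\ast A_0$); for $r=2$ it uses the one-dimensional classification theorem on the quotient together with $\OL(\F_p^2)\le(1+\gamma/4)p$, i.e.\ induction on $d$, to show the projected sequence is complete. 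For $d\ge 4$ the intermediate range $2\le r\le d-2$ has a quotient of dimension at least $2$ (so the $\F_p$ classification does not apply and completeness of the projection fails) and an $H$ of dimension at least $2$ (so Erd\H{o}s--Heilbronn inside a block does not apply); your plan supplies no mechanism for this case, and that is the open difficulty.
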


 As far as we know (prior to this paper)  there has been no progress on either  conjecture. As  another application of Theorem \ref{theorem:characterization1}
 we settle  the $d=3$ case of Conjecture \ref{conj2}.

\begin{theorem}[Asymptotic value of Olson's constant for $\F_p^3$]\label{theorem:Olson3} Let $\gamma>0$ be an arbitrary positive constant, then the following holds for sufficiently large prime $p$,

$$\OL(\F_p^3)\le (2+\gamma)p.$$

\end{theorem}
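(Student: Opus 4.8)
The plan is to argue by contradiction: suppose $A \subset \F_p^3$ is zero-sum-free with $|A| = \lceil (2+\gamma)p \rceil$, and derive a contradiction for $p$ large. The starting point is Theorem \ref{theorem:characterization1}, applied with $d = 3$, with $\alpha$ a small constant (to be chosen, say $\alpha = \gamma/10$), and with $\beta$ a small constant as well (so that ``$m \le \beta p$ with $m^\ast A = \F_p^3$'' contradicts zero-sum-freeness only if we can also produce $0$; note that since $A$ is zero-sum-free, $0 \notin m^\ast A$ for every $m$, so the first alternative of Theorem \ref{theorem:characterization1} is \emph{never} available — we always land in the partition alternative). This gives us $\ep = \ep(3,\alpha,\beta) > 0$ and a partition $A = A_0 \sqcup A_1 \sqcup \cdots \sqcup A_l$ with $|A_0| \le \alpha p$, each $|A_i| = \lfloor \ep p\rfloor$, and a subspace $H$ such that each $A_i$ ($i \ge 1$) lies in a translate of $H$, while $m^\ast A_0 \supseteq$ (a translate of $H$) for some $m \le |A_0|$.

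The next step is a case analysis on $\dim H \in \{0,1,2\}$ (the case $H = \F_p^3$ is impossible since then $|A_i| = \lfloor \ep p \rfloor$ suffices to have $m^\ast A_i = \F_p^3$, forcing the first alternative; more directly, $l \ge 1$ forces $|A| \ge \ep p$, fine, but $H = \F_p^3$ would let a single $A_i$ cover $\F_p^3$).

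\emph{Case $\dim H = 0$:} then each $A_i$ is $\lfloor \ep p\rfloor$ copies of a single vector $v_i$, and $A_0$ can represent $0$ via $m^\ast A_0$ — wait, it represents a translate of $\{0\}$, i.e. it represents \emph{some} vector. Actually the key leverage is different: since $|A| \ge (2+\gamma)p$ and $|A_0| \le \alpha p$, we have $l \lfloor \ep p \rfloor \ge (2 + \gamma - \alpha)p$, so $l \ge (2+\gamma-\alpha)/\ep$, a large constant number of the $A_i$'s. With $l$ distinct (or repeated) vectors each of multiplicity $\lfloor \ep p\rfloor \gg \sqrt p \ge \OL(\F_p)$, and $l \ge 3$ vectors spanning $\F_p^3$ (if they do not span, all of $A$ lies in few translates of a proper subspace and we recurse / use the lower-dimensional Olson bound which is $O(p) + O(\sqrt p)$, too small), one builds a zero-sum: pick three vectors $v_1, v_2, v_3$ forming a basis, and from $A$ one can assemble $\sum c_i v_i = 0$ with $0 \le c_i \le $ multiplicity — this is a counting/pigeonhole argument using that each coefficient ranges over an interval of length $\ge \ep p$, so the sums $c_1 v_1 + c_2 v_2 + c_3 v_3$ cover all of $\F_p^3$ (each coordinate is a full residue progression). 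Hence $0 \in S_A$, contradiction.

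\emph{Cases $\dim H = 1$ and $\dim H = 2$:} Project modulo $H$, i.e. work in $\F_p^3 / H$, which is $\F_p^2$ or $\F_p^1$ respectively. Each $A_i$ ($i \ge 1$) maps to a single point $\bar v_i$ in the quotient, and $A_0$ maps to a set $\bar A_0$ of size $\le \alpha p$. The sequence $\bar A$ in the quotient must be zero-sum-free (any zero-sum in the quotient lifts: the coset representatives sum to an element of $H$, and then we use that $m^\ast A_0$ contains a translate of $H$ to cancel it — here is where the third bullet of Theorem \ref{theorem:characterization1} is essential, and where I must be careful that the cardinalities match up, using that each $\bar v_i$ has a large reservoir of multiplicity $\lfloor \ep p\rfloor$ to adjust the lift within $H$). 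But the number of points $\bar v_i$ with multiplicity, plus $|\bar A_0|$, is $\ge (2+\gamma)p$, which for $\dim H = 2$ (quotient $\F_p$) exceeds $\OL(\F_p) = O(\sqrt p)$ massively — even a single $\bar v_i \ne 0$ of multiplicity $\ge \sqrt{2p}$ gives a zero-sum in $\F_p$, and $\bar v_i = 0$ for all $i$ means $A \setminus A_0 \subset H$, so $A$ lies in $\le 1 + \alpha p /\lfloor\ep p\rfloor = O(1)$ translates of a $2$-dimensional $H$, reducing to $\OL(\F_p^2) + O(p) = O(p)$... I need $|A| > $ that bound, which holds since $\OL(\F_p^2) = p + O(\sqrt p)$ and there are $O(1)$ translates giving $O(p)$, beaten by $(2+\gamma)p$ only if the number of translates is $< 2+\gamma$; so I must bound the number of translates more carefully — in fact a single translate of a $2$-space contributing $\OL(\F_p^2) - 1 \approx p$ elements, so two translates give $\approx 2p$, and I need $(2+\gamma)p > 2p$, which is true; more than two translates of a $2$-space: then we have $\ge 3$ translates but an $m$-zero-sum argument across translates kicks in. For $\dim H = 1$ (quotient $\F_p^2$), we get a zero-sum-free \emph{sequence} in $\F_p^2$ of size $\ge (2+\gamma)p$; but Theorem \ref{theorem:Olson2}'s proof (which, as noted, classifies zero-sum-free sets of size $\ge \ep p$ in $\F_p^2$, and the analogous statement for sequences should give that zero-sum-free sequences in $\F_p^2$ have size $\le p + O(\sqrt p) + (\text{exceptional } O(p) \text{ from one high-multiplicity direction})$) — more simply, apply Theorem \ref{theorem:characterization1} again in dimension $2$ to $\bar A$ and iterate the same dichotomy, and the recursion terminates because in dimension $1$ a zero-sum-free sequence that is not dominated by $O(1)$ high-multiplicity points has size $O(\sqrt p)$.

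\emph{The main obstacle.} The genuinely delicate point is the lifting/cancellation argument in the $\dim H \ge 1$ cases: one must show that a zero-sum in the quotient $\F_p^3/H$ can be promoted to a genuine zero-sum in $\F_p^3$, and this requires simultaneously (i) choosing which elements of each $A_i$ to use so that the coset representatives are available in the right multiplicities, and (ii) using $A_0$ (via the property $m^\ast A_0 \supseteq v + H$) together with leftover freedom inside the $A_i$'s to land exactly at $0 \in H$ rather than merely at $v + H$. Managing the bookkeeping so that the cardinalities consumed never exceed what is available — while keeping the quotient sequence large enough to force a zero-sum there — is where the argument must be done with care; the constants $\alpha, \beta, \ep$ and the threshold $(2+\gamma)p$ must be chosen in the right order ($\gamma$ given, then $\alpha \ll \gamma$, then $\beta$ from the target, then $\ep$ from Theorem \ref{theorem:characterization1}, then $p$ large). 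Everything else is pigeonhole over arithmetic progressions of length $\Theta(p)$ in each coordinate, which is routine.
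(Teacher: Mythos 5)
Your high-level skeleton matches the paper's: apply Theorem \ref{theorem:characterization1}, split on $\dim H$, project to the quotient, find a zero-sum there, and lift it. But the two ingredients that actually make the argument close are missing, and you have explicitly flagged the second as unresolved. First, the quantitative engine in the $\dim H=1$ case is Olson's theorem on the Davenport constant (Theorem \ref{theorem:Davenportconstant}): any \emph{sequence} of $d(p-1)+1$ elements of $\F_p^d$ has a zero-sum subsequence. Applied to the projections $v_i$ taken with multiplicity $|A_i|-\lfloor 2/\ep\rfloor$, whose total exceeds $2(p-1)+1$ precisely because $|A|\ge(2+\gamma)p$, it produces $m_i$ with $\sum m_i v_i=0$ in the quotient. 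Nothing you cite replaces this: your appeal to $\OL(\F_p)$ and to the set-classification behind Theorem \ref{theorem:Olson2} concerns \emph{sets}, and the relevant extremal examples here are high-multiplicity sequences (e.g.\ $p-1$ copies each of $e_1$ and $e_2$ is a zero-sum-free sequence of size $2p-2$ in $\F_p^2$), so ``iterate the characterization'' does not obviously yield the needed $\approx 2p$ bound. Relatedly, your claim that a single $\bar v_i\neq 0$ of multiplicity $\ge\sqrt{2p}$ gives a zero-sum in $\F_p$ is false: $k$ copies of $v\neq 0$ have subsequence sums $\{v,2v,\dots,kv\}$, which avoid $0$ until $k\ge p$. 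Second, the lifting step you correctly identify as ``the main obstacle'' is resolved in the paper not by bookkeeping with $A_0$ but by a one-line application of the Erd\H{o}s--Heilbronn bound (Theorem \ref{theorem:ErdosHeilbronn}): after rescaling the $m_i$ so that some $m_1\ge\lfloor 2/\ep\rfloor$, one has $m_1|A_1|-m_1^2+1\ge p$, so $m_1^{\ast}A_1$ covers an entire translate of the line $H$, and hence $\sum_i m_i^{\ast}A_i$ covers the whole of $H$ including the origin. Without this mechanism your lift does not go through.

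Two smaller points. The case $\dim H=0$ is vacuous because $A$ is a set (each $A_i$ would be one vector with multiplicity $\lfloor\ep p\rfloor>1$); your substitute argument there is also wrong as stated, since coefficients $c_i$ ranging over intervals of length $\ep p$ give only $(\ep p)^3\ll p^3$ sums. In the case $\dim H=2$, the paper's route is to bound $|A\cap H|<\OL(\F_p^2)\le(1+\gamma/4)p$ (here the set hypothesis is used), conclude that at least $(1+\gamma/2)p$ elements project to nonzero residues, deduce from Theorem \ref{theorem:classification:d=1} that the projected sequence is complete in $\F_p$, and finish with $S_{A\setminus A_0}+m^{\ast}A_0=\F_p^3$; your version of this case rests on the incorrect multiplicity claim above.
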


It is possible that one can establish Conjecture \ref{conj2} for arbitrary $d$ using this approach, but
the technical details still elude us at this point.

\vskip2mm


In the rest of this section we introduce our notation. The
remaining sections are organized as follows. In Section \ref{section:lemmas} we provide our main lemmas. The proof of the characterization result is presented in Section \ref{section:characterization1}. The last two sections are devoted to the proofs of Theorem \ref{theorem:Olson2} and Theorem \ref{theorem:Olson3}, respectively.

\vskip .4in

\centerline{\bf Notation}

\vskip .2in

{\it Norm in $\F_p$}. For $x \in \F_p$, $\|x\|$ (the
norm of $x$) is the circular distance from $x$ to $0$. (For example, the norm
of $p-1$ is 1.)

{\it Dilation of a sequence}. For $b \in \F_p$ and a sequence $A\subset \F_p$,
$b \cdot A$ is the collection of all $ba$, where $a$ varies in $A$.

{\it Projections}. Let $H$ and $H'$ be (not necessarily orthogonal) complementary subspaces. For a vector $a\in \F_p^d$ we let
$\pi_H(a),\pi_{H'}(a)$ be the unique vectors
$a_H,a_{H'}$ satisfying $a=a_H+a_{H'}$
and $a_H\in H, a_{H'}\in H'$ respectively. (Whenever we use this notation, both $H$ and $H'$ will be specified.)
For a given sequence $A$, set  $\pi_H(A):=\{\pi_H(a)|a\in A\}.$

{\it Affine basis}.  A collection of $d+1$ vectors in  general position  forms an {\it affine basis} of $\F_p^d$.

\section{Technical Lemmas}\label{section:lemmas}

We are going to  use  the following results from \cite{AD}.

\begin{lemma}[Sumset of affine bases, \cite{AD}] \label{lemma:AD1} Assume that $s \le p$. Let $A_1, \dots, A_s$ be $s$
affine bases of $\F_p^d$. Then

$$|A_1 + \dots + A_s | \ge (\frac{s}{8d})^d. $$

\end{lemma}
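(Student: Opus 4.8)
The plan is to prove the bound $|A_1 + \dots + A_s| \ge (s/8d)^d$ by a projection-and-induction argument on the dimension $d$, following the Alon--Dubiner philosophy of reducing a $d$-dimensional sumset problem to a one-dimensional one along a well-chosen direction. For $d=1$ the statement is essentially the Cauchy--Davenport inequality: each $A_i$ is a pair of distinct elements of $\F_p$, so it has diameter $1$ as a difference set, and one shows $|A_1+\dots+A_s| \ge \min(p, s+1) \ge s/8$ using that consecutive sumsets grow by at least one until they fill $\F_p$; the constant $8$ gives plenty of slack. So the real content is the inductive step.

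For the inductive step, fix a coordinate direction — say the last one — and let $\pi$ denote the projection of $\F_p^d$ onto $\F_p^{d-1}$ killing that coordinate, with the fiber direction spanned by $e_d$. Each affine basis $A_i$ consists of $d+1$ points in general position; I would first argue that, after discarding at most one point from each $A_i$, the projected set $\pi(A_i)$ still contains an affine basis of $\F_p^{d-1}$ (a generic hyperplane projection of $d+1$ points in general position leaves $d$ of them in general position in the quotient), while the discarded points supply control in the $e_d$-direction. Concretely, split the indices: on a large fraction of the $A_i$ the projections contain affine bases of $\F_p^{d-1}$, and on a comparable fraction the $A_i$ contain two points differing only in the $e_d$-coordinate, i.e. an affine basis (a pair) of the line $\F_p e_d$. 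Applying the induction hypothesis to the first group gives a lower bound $(s'/8(d-1))^{d-1}$ on the size of the projection $\pi(A_1+\dots+A_s)$, and applying the $d=1$ case to the second group gives that over a suitable sub-sum the fiber above a fixed point of the projection has size $\ge s''/8$. Multiplying the number of occupied fibers by the guaranteed fiber size yields a bound of the shape $(s/8(d-1))^{d-1}\cdot (s/8)$, and one checks $(s/8(d-1))^{d-1}\cdot(s/8) \ge (s/8d)^d$ by a crude comparison of the constants (using $(d/(d-1))^{d-1} \le e < 8$).

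The main obstacle will be the bookkeeping that makes the two halves of the argument compatible: I need the \emph{same} many indices $i$ to simultaneously feed the $(d-1)$-dimensional sumset (via $\pi(A_i)$) and the $1$-dimensional fiber sumset (via a pair of points in $A_i$ with equal projection), and a single affine basis $A_i$ need not obviously do both. The fix is a counting/pigeonhole step: since $A_i$ has $d+1$ points in general position, for a random choice of the splitting direction (or by averaging over which point to delete) a definite proportion of the $A_i$ will have a projection containing an affine basis of the quotient, and among the rest one can still extract fiber pairs; alternatively one can iterate, peeling off one dimension at a time and charging a constant factor loss in $s$ at each peel, which is exactly where the denominator $8d$ (rather than something smaller) buys the room. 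I would also need to handle the saturation case $|A_1+\dots+A_s| = p^d$ separately, where the bound is trivial since $(s/8d)^d \le (p/8)^d < p^d$.

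Finally, I should remark that the statement already restricts to $s \le p$, which is what keeps the one-dimensional Cauchy--Davenport estimate in its linear (rather than saturated) regime and prevents the various sub-sums from wrapping around; I would keep track of this hypothesis through each application of the inductive step, noting that the sub-counts $s', s''$ extracted at each stage are bounded by $s \le p$, so no modular saturation interferes except in the trivial case noted above. The details of constants are routine and I will not grind through them here beyond verifying the single inequality $(s/8(d-1))^{d-1}(s/8) \ge (s/8d)^d$.
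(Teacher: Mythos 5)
You should first be aware that the paper does not prove this lemma: it is quoted verbatim from Alon and Dubiner \cite{AD} and used as a black box, so there is no internal proof to compare against and your argument has to stand on its own.

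Several pieces of your plan are sound: the base case via Cauchy--Davenport, the observation that $\pi(A_i)$ contains an affine basis of $\F_p^{d-1}$ (the images affinely span the quotient, so $d$ of them are affinely independent), and even the final multiplication step, \emph{provided} the fiber sub-sum really lies in a single coset of the line $\F_p e_d$ (then translating it by a preimage of each of the $N$ projected points gives $N$ pairwise disjoint fibers of size $\geq M$, hence $NM$ points). The fatal gap is the input to that last step: an affine basis of $\F_p^d$ need not contain two points with the same image under $\pi$. General position does not force any pair of the $d+1$ points to differ only in the $e_d$-coordinate; for instance $\{(0,0),(1,0),(2,1)\}$ is an affine basis of $\F_p^2$ whose three points lie in three distinct fibers of the projection to the first coordinate, and generically all $s$ of your bases look like this. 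Your proposed repairs do not close this. For a fixed $A_i$, the directions along which some pair of its points are collinear are exactly the $\binom{d+1}{2}$ lines spanned by its difference vectors, out of $(p^d-1)/(p-1)$ directions in all; so choosing the splitting direction randomly (or by pigeonhole) guarantees a common ``vertical'' direction for at most about $s\binom{d+1}{2}(p-1)/(p^d-1)$ of the bases, which, since $s\leq p$, is $O(1)$ already for $d=2$ and $o(1)$ for $d\geq 3$ --- nowhere near the $\Theta(s/d)$ bases you need. Averaging over which point to delete only changes which points feed the $(d-1)$-dimensional induction; it cannot manufacture a collision that is not there.

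Without the vertical pairs, the fiber contribution degenerates and the projection bound alone yields only $|A_1+\dots+A_s|\geq(s/8(d-1))^{d-1}$, or at best the additive estimate $N+M-1$, far short of $(s/8d)^d$. The missing idea --- and the real content of the Alon--Dubiner lemma --- is that growth transverse to the projection must be extracted from sums \emph{across} different bases (two bases can produce a vertical difference $(u_1+u_2)-(v_1+v_2)\in\F_p e_d$ even though neither contains one), or equivalently from an isoperimetric-type statement that adding an affine basis to a set $B$ forces $|A+B|\geq|B|+c|B|^{(d-1)/d}$ in the relevant range; either route requires an argument you have not supplied. A minor further point: the constant check should be done with an honest split of the $s$ bases, say $|I|:|J|=(d-1):1$, giving $(s/8d)^{d-1}\cdot(s/d)\geq(s/8d)^d$; your displayed inequality uses all $s$ bases on both sides of the product, and a half-and-half split in fact fails for large $d$.
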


\begin{lemma}[Linear independence implies growth in sumset, \cite{AD}] \label{lemma:AD2} Let $W$ be a number at least one
and $A$ be a sequence in $\F_p^d$ such that no hyperplane contains
more than $|A|/(4W)$ elements of $A$. Then for every subset $Y$ of
$\F_p^d$ of size at most $p^d/2$, there is an element $a$ of $A$
such that

$$|(a+Y) \backslash Y| \ge \frac{W}{16p} |Y|. $$
\end{lemma}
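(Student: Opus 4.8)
The plan is to argue by contradiction: assume $|(a+Y)\setminus Y|<\eps|Y|$ for every $a\in A$, where $\eps:=\tfrac{W}{16p}$, and derive a contradiction. One should first dispose of a degenerate range. Averaging a hyperplane count over the $p$ parallel hyperplanes in a fixed direction gives $\max_c|A\cap\{\eta=c\}|\ge|A|/p$, so the hypothesis forces $4W\le p$; hence if $|Y|<16p/W$ then $\eps|Y|<1$, and since $\operatorname{Per}(Y):=\{v:Y+v=Y\}$ is a proper subspace (because $0<|Y|<p^{d}$) it lies in some hyperplane, which by hypothesis omits an element $a\in A$, giving $|(a+Y)\setminus Y|\ge1>\eps|Y|$ — a contradiction. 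So from now on we may assume $|Y|\ge16p/W$.

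The key device is \emph{slicing}. For a nonzero linear functional $\eta$ on $\F_p^{d}$ set $Y_c:=\{y\in Y:\eta(y)=c\}$ and $f(c):=|Y_c|$ for $c\in\F_p$, so $\sum_c f(c)=|Y|$. Since the $\{\eta=c\}$-layer of $a+Y$ is $a+Y_{c-\eta(a)}$ and $|(X+a)\setminus X'|\ge|X|-|X'|$,
\[
|(a+Y)\setminus Y|=\sum_{c\in\F_p}\bigl|(a+Y_{c-\eta(a)})\setminus Y_c\bigr|\ \ge\ \sum_c\bigl(f(c-\eta(a))-f(c)\bigr)^{+}=\tfrac12\,\|f-S^{\eta(a)}f\|_{1},
\]
where $S^{t}$ denotes translation by $t$ on $\F_p$ and $x^{+}:=\max(x,0)$. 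By hypothesis each of the $p$ hyperplanes $\{\eta=c\}$ meets $A$ in at most $|A|/(4W)$ points, so $A$ meets at least $4W$ of them; thus $T_\eta:=\{\eta(a):a\in A\}$ has $|T_\eta|\ge 4W$, and by the Cauchy--Davenport theorem its $k$-fold sumset equals $\F_p$ once $k=\lceil(p-1)/(4W-1)\rceil=O(p/W)$. Using $\|f-S^{s+t}f\|_1\le\|f-S^{s}f\|_1+\|f-S^{t}f\|_1$ and combining the display (which is $<\eps|Y|$ by assumption) with this sumset fact gives
\[
\|f-S^{\sigma}f\|_{1}<2k\eps|Y|\le\tfrac18|Y|\qquad\text{for all }\sigma\in\F_p,
\]
and this holds for \emph{every} direction $\eta$: the layer‑size profile of $Y$ is nearly constant in every direction.

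For $d=1$ this finishes at once, since there $f=\mathbf 1_Y$ and $\tfrac12\|f-S^{\sigma}f\|_1=|(Y+\sigma)\setminus Y|$, so $|(Y+\sigma)\setminus Y|<\tfrac1{16}|Y|$ for all $\sigma$, whence $\sum_\sigma|(Y+\sigma)\setminus Y|<\tfrac1{16}p|Y|$, contradicting $\sum_\sigma|(Y+\sigma)\setminus Y|=|Y|(p-|Y|)\ge\tfrac12 p|Y|$. For $d\ge2$ I would induct on $d$. Fix a direction $\eta$, identify $\F_p^{d}\cong\F_p^{d-1}\times\F_p$ with $\eta$ the last coordinate, and let $\pi$ be the projection onto $\F_p^{d-1}$. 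Iterating the identity $\sum_c|(a+Y_{c-t})\triangle Y_c|=2\sum_c|(a+Y_{c-t})\setminus Y_c|$ together with the same Cauchy--Davenport chaining upgrades ``nearly constant profile'' to ``nearly equal layers'': for all $c,c'$, $Y_c$ agrees with a translate of $Y_{c'}$ up to a symmetric‑difference error small relative to $|Y|$. Hence $Y$ is, up to a small error, $\bigcup_c(g(c)+Y_0)\times\{c\}$ for a fixed $Y_0\subseteq\F_p^{d-1}$ with $|Y_0|\approx|Y|/p\le p^{d-1}/2$ and some $g:\F_p\to\F_p^{d-1}$; then for $a=(\bar a,t)$ the layer formula makes $|(a+Y)\setminus Y|$ equal, up to the accumulated error, to $\sum_c|((\bar a+g(c-t)-g(c))+Y_0)\setminus Y_0|$. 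Since $\pi(A)$ (counted with multiplicity) still satisfies the hyperplane hypothesis in $\F_p^{d-1}$ with the \emph{same} $W$ — both a hyperplane of $\F_p^{d-1}$ and a line of $\F_p^{d}$ pull back into hyperplanes of $\F_p^{d}$ — the inductive hypothesis, applied to $Y_0$ and a suitable linear image of $A$, would produce $a\in A$ with this sum $\gtrsim p\cdot\tfrac{W}{16p}|Y_0|\approx\eps|Y|$, contradicting the standing assumption.

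The hard part is precisely this inductive step for $d\ge2$: passing from the size‑level statement (profiles nearly constant) to the set‑level statement (layers nearly equal) while controlling how the symmetric‑difference errors compound over the $O(p/W)$ chaining steps, and coping with the fact that $c\mapsto g(c)$ need not be linear, which obstructs the clean reduction to ``$\pi(A)$ acting on $Y_0$'' — one must either show $g$ is essentially linear, or use an averaged version of the inductive step, or choose the slicing direction more carefully. Because the inductive estimate is essentially tight (one extracts $\approx\eps|Y|$ against an assumed bound $<\eps|Y|$), the entire error budget must fit inside a narrow margin, which is exactly where the precise constants $\tfrac1{4W}$ and $\tfrac{W}{16p}$ in the statement are needed. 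A conceivable alternative — running the base of the induction by Fourier analysis on $Y$, exploiting $\sum_{s\neq0}|\widehat{\mathbf 1_A}(s\eta)|^{2}\le p|A|^{2}/(4W)$ along each line $\{s\eta\}$ (an immediate consequence of the hyperplane hypothesis and Parseval in $\F_p$) — seems to run into the same dimensional obstruction once $|Y|$ is large.
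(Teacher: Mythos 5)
First, a point of reference: the paper does not prove Lemma \ref{lemma:AD2} at all --- it is quoted from Alon and Dubiner \cite{AD} and used as a black box --- so there is no in-paper argument to measure yours against, and your attempt must stand as a free-standing proof. It does not: there is a genuine gap, and it is the one you flag yourself, namely the inductive step for $d\ge 2$. Everything before that is sound and in the right spirit --- disposing of the range $|Y|<16p/W$ via the stabilizer of $Y$, the slicing inequality $|(a+Y)\setminus Y|\ge\frac12\|f-S^{\eta(a)}f\|_1$, the count $|T_\eta|\ge 4W$, and the Cauchy--Davenport chaining showing that the layer-size profile of $Y$ is nearly translation-invariant in every direction; the $d=1$ case is complete. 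But $d\ge 2$ is precisely the case the paper needs (it applies the lemma in $\F_p^2$ and $\F_p^3$), and for that case you offer only a plan together with a list of obstructions you do not overcome.

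Moreover, the plan as described cannot close on quantitative grounds. To compare two arbitrary layers $Y_c$ and $Y_{c'}$ you must chain through $k=\Theta(p/W)$ elements of $T_\eta$, and each link contributes a symmetric-difference error of order $\eps|Y|=\frac{W}{16p}|Y|$; hence the total error in the approximation $Y\approx\bigcup_c\bigl(g(c)+Y_0\bigr)\times\{c\}$ is $\Theta(k\eps|Y|)=\Theta(|Y|)$, a constant fraction of $|Y|$. The gain you hope to extract from the inductive hypothesis applied to $Y_0$ is only $\Theta\bigl(\frac{W}{p}|Y|\bigr)$, smaller by a factor of about $p/W$, so the accumulated error swamps the target no matter how the non-linearity of $g$ is handled; this is a structural failure of the error budget, not a bookkeeping issue. (A Fourier route runs into a related loss: bounding $1-\cos(2\pi\theta)$ from below by $8\|\theta\|^2$ and using the hyperplane hypothesis only yields a gain of order $W^2/p^2$ per element, which is weaker than the claimed $W/(16p)$ whenever $W=o(p)$.) So the proposal should be regarded as an honest partial sketch --- correct base case, correct preparatory reductions --- but not a proof of the lemma; closing it requires a genuinely different mechanism for the higher-dimensional step rather than a refinement of the one proposed.
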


One can immediately derive the following corollary

\begin{corollary} \label{cor:AD2}

Let $W$ be a number at least one and $A$ be a sequence in $\F_p^d$
such that no hyperplane contains more than $|A|/(4W)$ elements of $A$.
Then for every subset $Y$ of $\F_p^d$ of size at most $p^d/2$ and
any element $a' \in  A$ there is another element $a \in A$ such that

$$|(a+Y) \backslash (a'+Y)| \ge \frac{W}{16p} |Y|. $$
\end{corollary}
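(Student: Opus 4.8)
The plan is to derive this corollary from Lemma \ref{lemma:AD2} by a simple triangle-inequality argument, after replacing the set $Y$ by a suitable translate. Fix an element $a' \in A$ and a subset $Y$ of $\F_p^d$ with $|Y| \le p^d/2$. First I would apply Lemma \ref{lemma:AD2} to the \emph{translated} set $Y' := a' + Y$; since translation does not change cardinality, $|Y'| = |Y| \le p^d/2$, so the hypothesis of the lemma is met. The lemma then produces an element $a \in A$ with
$$|(a + Y') \setminus Y'| \ge \frac{W}{16p}|Y'| = \frac{W}{16p}|Y|.$$
Unwinding the definition of $Y'$, the left-hand side is $|(a + a' + Y) \setminus (a' + Y)|$. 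This is not yet quite the claimed inequality, so a small adjustment is needed.

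The issue is that $a + a' + Y$ carries an extra shift by $a'$ compared to the target $a + Y$. To fix this I would instead apply Lemma \ref{lemma:AD2} to the set $Y'' := (-a') + Y$ (again of the same cardinality), obtaining $a \in A$ with $|(a + Y'') \setminus Y''| \ge \frac{W}{16p}|Y|$; translating the whole inequality back by $a'$ — which is a bijection of $\F_p^d$ and hence preserves the cardinality of the symmetric set difference — gives
$$\bigl|(a + Y) \setminus (a' + Y)\bigr| \;=\; \bigl|\,(a + Y'') \setminus Y''\,\bigr| \;\ge\; \frac{W}{16p}|Y|,$$
which is exactly the assertion of the corollary. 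Note that the hypothesis on $A$ (no hyperplane containing more than $|A|/(4W)$ of its elements) is used only through Lemma \ref{lemma:AD2} and is unaffected by all of this, since we never modify $A$.

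There is essentially no obstacle here: the only point requiring a line of care is the bookkeeping of which translate of $Y$ to feed into Lemma \ref{lemma:AD2} so that, after translating back, the ``base point'' of the second copy is $a'$ rather than $0$. One should also remark that the element $a$ produced may a priori coincide with $a'$; if the statement is meant to assert that $a \neq a'$ (``another element''), then one observes that the conclusion $|(a+Y)\setminus(a'+Y)| \ge \frac{W}{16p}|Y| > 0$ already forces $a + Y \neq a' + Y$, hence $a \neq a'$, so this is automatic and requires no extra argument.
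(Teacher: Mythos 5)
Your reduction to Lemma \ref{lemma:AD2} by translating $Y$ does not work, and the error is in the displayed identity. With $Y'' := (-a') + Y$ you have $a + Y'' = (a - a') + Y$ and $a' + Y'' = Y$, so translating the set $(a+Y'')\setminus Y''$ by $a'$ yields $(a+Y)\setminus Y$, \emph{not} $(a+Y)\setminus(a'+Y)$. More fundamentally, the quantity $|(a+Z)\setminus Z|$ produced by Lemma \ref{lemma:AD2} is invariant under replacing $Z$ by any translate of $Z$ (since $|(a+(v+Z))\setminus(v+Z)| = |(a+Z)\setminus Z|$), so no choice of translate of $Y$ can ever manufacture the asymmetric quantity $|(a+Y)\setminus(a'+Y)|$; both of your attempts collapse back to the statement of the lemma itself. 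You correctly sensed that your first attempt produced a spurious extra shift, but the ``fix'' fails for the same structural reason.

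The correct move --- and what the paper does --- is to translate the \emph{sequence} $A$ rather than the set $Y$: apply Lemma \ref{lemma:AD2} to $A' := \{x - a' \mid x \in A\}$ and to $Y$ itself. The hyperplane hypothesis transfers, since a hyperplane $H$ containing $|A'|/(4W)$ elements of $A'$ gives the hyperplane $a'+H$ containing $|A|/(4W)$ elements of $A$. The lemma then returns an element of $A'$, which is necessarily of the form $a - a'$ with $a \in A$, satisfying $|((a-a')+Y)\setminus Y| \ge \frac{W}{16p}|Y|$; translating this set by $a'$ gives exactly $(a+Y)\setminus(a'+Y)$, as required. (Your closing remark that $a \neq a'$ is automatic because the right-hand side is positive is fine, but it does not repair the main step.)
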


\begin{proof} (of Corollary \ref{cor:AD2}) Apply Lemma \ref{lemma:AD2} for
 the sequence $A' := \{x-a'| x\in A \}$ and $Y$. If there is a
 hyperplane $H$ containing $|A'|/(4W) = |A|/(4W)$ elements of
$A'$, then the hyperplane $H':=a'+H$ contains $|A|/(4W)$ elements of
$A$. If there is no such $H$, then there is an element $a-a'$ in
$A'$ such that

$$|((a-a') + Y) \backslash Y| \ge \frac{W}{16p} |Y|. $$

 Notice that $y \in ((a-a') + Y) \backslash Y$ if and only
if $a'+ y \in (a+Y)\backslash (a'+Y)$. The claim follows.
\end{proof}

Our proof for the characterization theorem is based on induction on $d$. We will invoke the following result from our earlier paper \cite{NgV}  as a black box.

\begin{theorem}[Characterization for incomplete sequences in $\F_p$, \cite{NgV}]\label{theorem:classification:d=1}
Assume that $A$ is an incomplete sequence in $\F_p$, with
sufficiently large $p$. Then there is a residue $b\neq 0$ such that
we can partition $b\cdot A$ into two disjoint subsequences $b\cdot A
= A^\flat \cup A^\sharp$ where
\begin{itemize}
\item $|A^\flat| \le p^{12/13},$
\item $\sum_{a\in A^\sharp} \|a\| < p.$
\end{itemize}
\end{theorem}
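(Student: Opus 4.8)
Here $\F_p$ means $\Z/p\Z$ with $p$ prime, $\|x\|$ the circular norm. This statement is a structure theorem for incomplete sequences in $\F_p$ (the main input of \cite{NgV}); here is how I would attack it. First, the conclusion is equivalent to the assertion: \emph{if $A$ is a sequence in $\F_p$ such that for every $b\neq0$ and every subsequence $B\subseteq A$ with $|A\setminus B|\le p^{12/13}$ one has $\sum_{x\in B}\|bx\|\ge p$, then $A$ is complete} (take $A^\flat=b\cdot(A\setminus B)$, $A^\sharp=b\cdot B$). So the plan is to prove this ``robustly spread $\Rightarrow$ complete'' form, and we may assume $|A|>p^{12/13}$ (otherwise $A^\flat=A$). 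Writing $\mu_v$ for the multiplicity of the residue $v$ and $k$ for the number of distinct residues, one has the exact description $S_A\cup\{0\}=\sum_v v\{0,1,\dots,\mu_v\}$, a sumset of $k$ arithmetic progressions of total length $\sum_v(\mu_v+1)$, so Cauchy--Davenport gives $|S_A\cup\{0\}|\ge\min(p,|A|+1)$. Hence if $|A|\ge p-1$ then $S_A\cup\{0\}=\F_p$, which with incompleteness forces $0\notin S_A$ (so $A$ is zero-sum-free), hence $|A|\le p-1$, hence $|A|=p-1$ and $A=\{b\}^{p-1}$ by the classification of length-$(p-1)$ zero-sum-free sequences in $\F_p$ --- and then the claim is immediate. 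So we may assume $p^{12/13}<|A|\le p-2$, and incompleteness gives $|A|+1\le|T|\le p-1$, where $T:=S_A\cup\{0\}$.

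The heart of the matter is the proper sumset $T=\sum_v v\{0,\dots,\mu_v\}$ with $|A|+1\le|T|<p$. Since $T$ is a sumset of arithmetic progressions and does not fill $\F_p$, a quantitative Freiman-type structure theorem in $\F_p$ should force $T$ to be efficiently --- hence essentially one-dimensionally --- contained in a genuine arithmetic progression $P$ of some difference $b^{-1}$ with $|P|<p$; equivalently, after dilating by $b$, $b\cdot T$ lies in an interval $\{-L,\dots,L'\}$ with $L+L'<p$. I would then define $A^\flat$ to be the elements $x\in A$ whose image $bx$ falls outside this interval (each such $x$ contributing a progression $bx\{0,\dots,\mu_x\}$ that sticks out of $b\cdot P$): the error term of the Freiman-type theorem is exactly what bounds $|A^\flat|$ by $p^{12/13}$. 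Setting $A^\sharp:=b\cdot(A\setminus A^\flat)$, its residues are small, and after a harmless further dilation normalising their common difference to $1$ a ``making change'' argument shows $S_{A^\sharp}\cup\{0\}$ is itself a full interval, of length $\sum_{a\in A^\sharp}\|a\|+1$; but $A^\sharp\subseteq b\cdot A$ gives $S_{A^\sharp}\cup\{0\}\subseteq b\cdot(S_A\cup\{0\})=b\cdot T$, so $\sum_{a\in A^\sharp}\|a\|+1\le|T|<p$, which is exactly what is wanted.

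The step I expect to be the genuine obstacle is the quantitative structure theorem for $T$: one needs that a sum of arithmetic progressions in $\F_p$ of total length $<p$ is, after deleting progressions of total length $\le p^{12/13}$, contained in a single short progression. A two-set Freiman $(3k-4)$-theorem in $\F_p$ (Freiman, R\o dseth, Grynkiewicz) is the natural starting point, but iterating it naively over the $k$ summands loses far too much; the exponent $12/13$ signals that one must interleave Freiman-type structure with a rectification step (transporting the relevant part of $A$ into a true interval of integers), Pl\"unnecke--Ruzsa sumset estimates to limit how much non-conforming multiplicity can survive, and a dyadic/density-increment loop re-running the analysis at finer scales --- and the bookkeeping across these is what pins the $p^{12/13}$ down. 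Everything else --- Cauchy--Davenport, the interval ``making change'', and the $|A|=p-1$ boundary case --- is routine by comparison.
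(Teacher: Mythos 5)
First, a point of comparison: the paper does not prove this statement at all --- it is imported verbatim from \cite{NgV} and explicitly used ``as a black box,'' so there is no internal proof to measure your proposal against. Judged on its own terms, your outline does capture the general philosophy of the cited result: pass to the contrapositive, write $S_A\cup\{0\}$ as a sum of arithmetic progressions $\sum_v v\{0,\dots,\mu_v\}$, use Cauchy--Davenport to dispose of the boundary case $|A|\ge p-1$, and then argue that a non-covering sumset forces the sequence, after dilation by a suitable $b$, to live in a short interval of integers up to an exceptional subsequence of size $p^{12/13}$.

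Nevertheless there is a genuine gap, and you name it yourself: the entire content of the theorem is the quantitative structure step (``a sum of arithmetic progressions in $\F_p$ of total length $<p$ is, after deleting progressions of total length $\le p^{12/13}$, contained in a single short progression''), and this is asserted rather than proved. No off-the-shelf Freiman-type theorem delivers it with these parameters; in \cite{NgV} it is the output of a long argument (long arithmetic progressions in iterated sumsets, rectification, exceptional-set bookkeeping), and the exponent $12/13$ has no source in your sketch. Two of the steps you call routine are also not correct as stated. (i) If $b\cdot T$ is wholly contained in an interval, then every $bx$ with $x\in A$ automatically lies in that interval (since $x\in S_A\subseteq T$), so your definition of $A^\flat$ as the elements ``sticking out'' yields the empty set and gives no control on norms; the exceptional set has to be produced by the structure theorem itself. (ii) The ``making change'' claim that $S_{A^\sharp}\cup\{0\}$ is a full interval of length $\sum_{a\in A^\sharp}\|a\|+1$ is false in general (the sequence consisting of two copies of $5$ has subset sums $\{5,10\}$); it requires a chain condition on the sorted norms, i.e.\ one must first know the surviving elements are individually small, which again must be extracted from the missing structural step. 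So the reduction is sound in outline, but the theorem itself remains unproved.
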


We close this section with a trivial, but useful,  fact.

\begin{fact}\label{fact:dimensionincrement}
Let $H_1,H_1'$ be subspaces such that $H_1\oplus H_1' =\F_p^d$. Let $A_1,A_2$ be such sequences in $\F_p^d$ that $m_1^\ast A_1$
contains a translate of a subspace $H_1$ of $\F_p^d$ and $m_2^\ast
(\pi_{H_1'}(A_2))$ contains a translate of a subspace $H_2$
of $H_1'$. Then $m_1^\ast A_1 + m_2^\ast A_2$ contains a
translate of the subspace $H_1+H_2$ in $\F_p^d$.
\end{fact}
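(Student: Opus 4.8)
The plan is to unwind the two hypotheses directly and add the resulting translates, using only that addition of sets distributes over translation and that a translate of $H_1$ plus a translate of $H_2$ is a translate of $H_1+H_2$. First I would record the two given inclusions explicitly: by hypothesis there is a vector $u\in\F_p^d$ with $u+H_1\subseteq m_1^\ast A_1$, and there is a vector $w\in H_1'$ with $w+H_2\subseteq m_2^\ast\big(\pi_{H_1'}(A_2)\big)$, where $H_2\subseteq H_1'$. The key observation to make is the relation between $m_2^\ast A_2$ and $m_2^\ast(\pi_{H_1'}(A_2))$: for any subsequence $B\subseteq A_2$ of size $m_2$, applying $\pi_{H_1'}$ term by term gives $\pi_{H_1'}\big(\sum_{a\in B}a\big)=\sum_{a\in B}\pi_{H_1'}(a)$, so $\pi_{H_1'}(m_2^\ast A_2)=m_2^\ast(\pi_{H_1'}(A_2))$. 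Consequently every element $s\in m_2^\ast A_2$ can be written $s=\pi_{H_1'}(s)+\pi_{H_1}(s)$ with $\pi_{H_1'}(s)\in m_2^\ast(\pi_{H_1'}(A_2))$ and $\pi_{H_1}(s)\in H_1$; in particular $m_2^\ast A_2$ contains a set of the form $\{\,v_y+y : y\in w+H_2\,\}$ where each $v_y\in H_1$ (here $v_y=\pi_{H_1}$ of whichever sum of $m_2$ elements of $A_2$ projects onto $y$).

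Next I would add the two sets. Since $m_1^\ast A_1+m_2^\ast A_2\supseteq (u+H_1)+\big(m_2^\ast A_2\big)$, and using the partial description of $m_2^\ast A_2$ above, we get
\[
m_1^\ast A_1+m_2^\ast A_2\ \supseteq\ \bigcup_{y\in w+H_2}\big(u+H_1+v_y+y\big)\ =\ \bigcup_{y\in w+H_2}\big(u+H_1+y\big),
\]
because $v_y\in H_1$ means $H_1+v_y=H_1$. The right-hand side equals $u+w+H_1+H_2$: indeed as $y$ ranges over $w+H_2$ the vectors $y-w$ range over $H_2$, so the union is $u+w+\bigcup_{h\in H_2}(H_1+h)=u+w+(H_1+H_2)$. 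Thus $m_1^\ast A_1+m_2^\ast A_2$ contains the translate $(u+w)+(H_1+H_2)$ of the subspace $H_1+H_2$, which is what we wanted. Note $H_1+H_2=H_1\oplus H_2$ is a subspace of $\F_p^d$ since $H_2\subseteq H_1'$ and $H_1\cap H_1'=\{0\}$, but we do not even need the directness of the sum for the statement as phrased.

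There is essentially no obstacle here; the only point requiring a moment's care is the commutation of $\pi_{H_1'}$ with finite sums, which holds because $\pi_{H_1'}$ is a linear map (it is the linear projection onto $H_1'$ along $H_1$), so it is genuinely routine. One should just be careful to keep the two basepoints $u$ and $w$ distinct and to observe that absorbing $v_y$ into $H_1$ is exactly the step that makes the union collapse to a single coset of $H_1+H_2$.
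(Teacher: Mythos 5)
Your proposal is correct and follows essentially the same route as the paper's own proof: both fix the two translates, lift each point of $w+H_2$ (resp.\ $v_2+H_2$) to an element of $m_2^\ast A_2$ differing from it by a vector of $H_1$, and then absorb that $H_1$-component into the coset $u+H_1$ to obtain the translate $(u+w)+(H_1+H_2)$. Your explicit remark that $\pi_{H_1'}$ commutes with the $m_2$-fold sums is exactly the step the paper uses implicitly, so there is nothing to add.
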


\begin{proof} (of Fact \ref{fact:dimensionincrement})
Since $m_1^\ast A_1$ contains a translate of $H_1$, there exists a vector $v_1\in \F_p^d$ such that $v_1+H_1 \subset m_1^\ast A_1$.

Since $m_2^\ast (\pi_{H_1'}(A_2))$ contains a translate of $H_2$, there exists a vector $v_2\in \F_p^d$ such that for any $h_2\in
H_2$ there corresponds a vector $h_1\in H_1$ satisfying $v_2+h_1+h_2\in m_2^\ast A_2$. Hence  $v_1+H_1 +v_2+h_1+h_2 = v_1+v_2+H_1+h_2 \subset m_1^\ast A_1 + m_2^\ast A_2$.

Since the above holds for all $h_2\in H_2$, we have $v_1+v_2 + H_1+H_2 \subset m_1^\ast A_1 + m_2^\ast A_2.$

\end{proof}

\section{Proof of Theorem \ref{theorem:characterization1}}\label{section:characterization1}

\subsection{Existence of rich hyperplanes}

\begin{lemma}[Rich hyperplane lemma] \label{lemma:AD3}
For any positive integer $d$ and positive constants  $\beta, \delta$
there is a positive constant $\ep$ such that the following holds.
Let $A$ be a sequence in $\F_p^d$ with at least $\delta p$ elements so that there is no $1\le m \le \beta p$ such that $m^{\ast} A=
\F_p^d$. Then, there is a hyperplane $H$ such that

$$ |A \cap H| \ge \ep p. $$

\end{lemma}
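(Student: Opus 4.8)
The plan is to argue by contradiction: suppose that \emph{every} hyperplane contains fewer than $\ep p$ elements of $A$, where $\ep$ is a small constant to be chosen (depending on $d,\beta,\delta$), and derive that $m^\ast A = \F_p^d$ for some $m \le \beta p$, contradicting the hypothesis. The engine is Corollary \ref{cor:AD2} (equivalently Lemma \ref{lemma:AD2}): if no hyperplane contains more than $|A|/(4W)$ elements of $A$, then adding a carefully chosen element of $A$ grows any set $Y$ of size $\le p^d/2$ by a multiplicative factor $1 + W/(16p)$. Setting $W$ so that $|A|/(4W) = \ep p$, i.e. $W = |A|/(4\ep p) \ge \delta/(4\ep)$, this factor is $1 + W/(16p) = 1 + |A|/(64 \ep p^2)$, which for $|A| \ge \delta p$ is at least $1 + \delta/(64\ep p)$. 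Thus one application of an element of $A$ multiplies $|Y|$ by roughly $1 + c/p$ with $c = \delta/(64\ep)$ as long as $|Y| \le p^d/2$.

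The first step is to build up a set of size $\ge p^d/2$ using few elements. Start from any single element $a_1 \in A$, so $Y_1 = \{a_1\}$, $|Y_1|=1$. We then want to iterate Corollary \ref{cor:AD2}, \emph{reusing} elements of $A$: since $A$ is a sequence, an element of multiplicity $\mu$ can be used $\mu$ times, but more robustly we observe that Corollary \ref{cor:AD2} lets us pass from $m^\ast(\text{initial segment})$ to $(m+1)^\ast$ by replacing the offset $a'$ by a new $a$ — the subtlety is that we must track which elements of $A$ have been consumed. The clean way is: partition (most of) $A$ into $\lceil \beta p /2 \rceil$ blocks each of size $\approx |A|/(\beta p/2) \ge 2\delta/\beta =: \delta'$ — wait, this is bounded, not growing. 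So instead: use that each block of $O(1)$ elements still has no hyperplane capturing more than an $\ep p$-fraction, apply Lemma \ref{lemma:AD2} once per block to grow $Y$ by factor $1 + c'/p$; after $t$ blocks the size is $(1+c'/p)^t \ge e^{c' t/(2p)}$ once $t \le p$, which exceeds $p^d/2$ as soon as $t \ge (2d/c') p \log p$... that is too many blocks. The correct iteration is the one in \cite{AD}: at stage $k$ one uses a \emph{fresh} element and the growth is multiplicative, so $O(p \log p)$ steps suffice only if $c'$ can absorb the $\log$; in fact $W$ is large (of order $1/\ep$), so the factor is $1 + \Omega(1/(\ep p))$ and $O(\ep p \log p)$ steps give size $p^d$, and choosing $\ep$ small this is $\le (\beta/2) p$. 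So: iterating Corollary \ref{cor:AD2} for $\le (\beta/2)p$ elements of $A$ produces $Y$ with $|Y| \ge p^d/2$; this uses a subsequence $A'$ of $A$ with $|A'| \le (\beta/2) p$, and $Y \subset |A'|^\ast A$ after translating out the offset.

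The second step is to double once more: apply the same machinery (or a direct Cauchy–Davenport-type/Alon–Dubiner argument) to the remaining elements of $A$ — of which there are still $\ge \delta p - (\beta/2)p > 0$, provided we also arrange $\ep$ small enough that the leftover still has $\ge (\delta/2)p$ elements with no rich hyperplane — to enlarge a set of size $\ge p^d/2$ to all of $\F_p^d$. Here the relevant statement is that if $|Y| \ge p^d/2$ and no hyperplane is too rich, then a bounded number (indeed $O(p)$, hence $\le (\beta/2)p$ for $\ep$ small) of further sumset steps give $Y = \F_p^d$: each step either grows $Y$ by a factor $1+\Omega(1/p)$ while $|Y| < p^d/2$, or, once $|Y| \ge p^d/2$, one can complement and run the same argument on $\F_p^d \setminus (\text{translate})$, whose size is $\le p^d/2$. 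Combining the two halves, $m^\ast A = \F_p^d$ for some $m \le \beta p$, the desired contradiction.

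The main obstacle is the bookkeeping in the iteration: Corollary \ref{cor:AD2} as stated only guarantees \emph{one} growth step and consumes offsets, so one has to be careful that (i) the multiplicative factor $1+W/(16p)$ with $W \asymp \delta/\ep$ really does let $O(\ep p \log p)$ steps reach size $p^d$ — this forces the precise relation between $\ep$ and $\beta$, namely $\ep$ must be taken smaller than a constant times $\beta/(d \log(1/\ep) \cdot \delta^{-1})$, which is self-referential but solvable — and (ii) that after spending $\le (\beta/2)p$ elements on the first phase and another $\le(\beta/2)p$ on the second, we never run out of elements of $A$ and the ``no rich hyperplane'' hypothesis persists for the leftover sequence (this just needs $\ep$ small relative to $\delta$ and $\beta$, so that $\delta p - \beta p/2$-many elements remain and still no hyperplane holds more than, say, $2\ep p$ of them). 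Making these quantitative choices consistent is the only real work; the structural input is entirely from \cite{AD} via Lemma \ref{lemma:AD2} and Corollary \ref{cor:AD2}.
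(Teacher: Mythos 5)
Your overall strategy (contradiction, then grow a set $Y$ via Lemma \ref{lemma:AD2}/Corollary \ref{cor:AD2} until it fills half of $\F_p^d$, then one more addition to cover everything) is the right engine, but there is a genuine gap at the starting point of the iteration, and you have in fact noticed it without resolving it. Starting from $Y_1=\{a_1\}$ of size $1$, each step multiplies $|Y|$ by $1+W/(16p)$, and the no-rich-hyperplane hypothesis caps $W$ at $|A|/(4\ep p)$, which in the worst case $|A|=\delta p$ is the \emph{constant} $\delta/(4\ep)$. Hence reaching size $p^d/2$ from size $1$ requires about $(64\ep/\delta)\,d\,p\log p$ steps: the logarithm that appears is $\log\bigl(p^d/|Y_1|\bigr)=d\log p$, not the $\log(1/\ep)$ you wrote in your consistency condition. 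No constant choice of $\ep$ (independent of $p$) makes $O(\ep p\log p/\delta)\le \beta p$; you would need $\ep=O(1/\log p)$, which violates the statement of the lemma (and may also exhaust $A$ itself, since only $\delta p$ elements are guaranteed). So the ``self-referential but solvable'' relation you propose between $\ep$ and $\beta$ is not actually solvable with constants.

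The missing idea, which is how the paper proceeds, is a dichotomy on disjoint affine bases. Either $A$ does not contain $2c_1p$ disjoint affine bases, in which case some hyperplane already contains $|A|-2c_1p(d+1)\ge \delta p/2$ elements and you are done with no iteration at all; or it does, in which case Lemma \ref{lemma:AD1} applied to $s=c_1p$ of them produces a starting set $\CE_0=E_1+\cdots+E_s$ with $|\CE_0|\ge (c_1p/8d)^d$, i.e.\ already a \emph{constant fraction} of $p^d$. From such a seed the ratio $p^d/|\CE_0|$ is $O_d(1)$, so only $O(p/W)$ further growth steps are needed, and taking $W$ a large constant makes this $\le \beta p/4$ while consuming only $c_1p+O(p/W)\le \beta p/2$ elements. (The paper also runs two disjoint copies of this construction, $\CE_k$ and $\CF_l$, each of size $>p^d/2$, and uses $|X|,|Y|>|G|/2\Rightarrow X+Y=G$ to finish; this replaces your somewhat vague ``complement and rerun'' step and keeps the element count additive and under $\beta p$.) Without the affine-basis seed, the $\log p$ in the step count is unavoidable and the argument does not close.
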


\begin{proof}(of Lemma \ref{lemma:AD3}) Let $c_1\le \min \{\beta/4(d+1), \delta/4(d+1) \}$ be a small positive constant. We consider
two cases.

{\bf Case 1.} One cannot find  $2c_1p$ disjoint affine bases in $A$.

 In this case, by the definition of affine basis, there is
a hyperplane $H$ containing
$$|A|- 2c_1 p(d+1) \ge \frac{\delta}{2}p$$ elements of $A$ and we
are done.

\vskip2mm

 {\bf Case 2.} One can find $ 2c_1p$ disjoint affine
bases in $A$.

 Set $s:=c_1p$ and let $E_1, \dots, E_s, F_1, \dots, F_s$
be the bases. Define

$$\CE_0 := E_1 + \dots + E_s   \mbox{  and  }  \CF_0:= F_1 + \dots + F_s. $$

 By Lemma \ref{lemma:AD1},

    $$\min \{|\CE_0|, |\CF_0| \}\ge (c_1p/8d)^d.$$

 Partition $A \backslash
(\cup_{i=1}^s E_i  \cup \cup_{i=1}^s F_i)$ into two sequences
$E^{\dag}, F^{\dag}$ of equal sizes (we can throw one element from
$A$ to ensure parity). By choosing $c_1$ sufficiently small, we can
assume that $|E^{\dag}| = |F^{\dag}| \ge |A|/4$.

 Let $W= W(d, \delta, \beta)$ be a large constant and $\ep$
be a small constant to be determined. Assume, for a contradiction
that there is no hyperplane containing $\ep |A|$ elements of $A$.

 Let $a_0'$ be an arbitrary element of $E^{\dag}$. By the
way we set $\ep$ so that

\begin{equation}\label{eqn:ep}
\ep |A| \le (|E^{\dag}|-\delta p/8)/ (8W).
\end{equation}

 Thus  there is no hyperplane containing $|E^{\dag}|/(8W)$
elements of $E^{\dag}$. By Corollary \ref{cor:AD2}, we find $a_0 \in
E^{\dag} \backslash \{a_0'\}$ such that

$$|(a'_0 + \CE_0) \cup (a_0+ \CE_0)| \ge (1+ \frac{W}{16p}) |\CE_0|. $$

 Define $\CE_1= (a_0' + \CE_0) \cup (a_0+\CE_0)$ and
$E^{\dag}_1 := E^{\dag} \backslash \{a_0', a_0\} $. Repeating  the
argument, we find elements $a_1', a_1 \in E^{\dag}_1$ such that

$$|(a_1' + \CE_1 ) \cup (a_1+ \CE_1) | \ge (1+ \frac{W}{16p}) |\CE_1|. $$

 In general, set $E^{\dag}_i:= E^{\dag}_{i-1} \backslash
\{a'_{i-1}, a_{i-1} \}$ and $\CE_i := (a'_{i-1} + \CE_{i-1}) \cup
(a_{i-1}+ \CE_{i-1})$. By induction, we have

$$|\CE_i|  \ge (1+ \frac{W}{16p}) ^i |\CE_0|, $$

 unless $|\CE_{i-1} | > p^d/2$. Thus by choosing $W$
sufficiently  large (in terms of $d,\delta$ and $\beta$), there is some $0\le k \le \min \{\beta p/4, \delta
p/16 \}$ such that

$$|\CE_k| > \frac{1}{2} p^d. $$

 Notice that in every step, the condition $\ep |A| \le
|E^{\dag}_i|/(4W)$ is satisfied because of \eqref{eqn:ep} and $|E^\dag_i|\ge |E^\dag| -2k \ge |E^\dag |-\delta p/8$.

Repeating the argument with
$F^{\dag}$ and $\CF_0$, we have for some $0\le l \le \min \{\beta p/4,
\delta p/16 \}$ that

$$|\CF_l| >  \frac{1}{2} p^d. $$

 Observe that if $X$ and $Y$ are two subsets of a finite
Abelian group $G$ and $|X|, |Y| > |G|/2$, then $X+Y= G$. Thus,

$$\CE_k + \CF_l = \F_p^d. $$

 The left hand side is a subset of $m^{\ast}A$ for some
small $m $. Indeed,  the elements in $\CE_k$ (or $\CF_l$) are sums of
exactly $c_1p+k$ (or $c_1p+l$) elements of $A$. Furthermore, by the
procedure, the sequence of elements of $A$ involved in $\CE_k$ is
disjoint from the sequence of elements of $A$ involved in $\CF_l$.
Finally,

$$m \le  2c_1p + (k+l) \le p (2c_1 + \beta/4 + \beta/4) \le \beta
p. $$

 This concludes the proof of Lemma
\ref{lemma:AD3}.

\end{proof}

\subsection{Completing the proof of Theorem \ref{theorem:characterization1}}

  Let $\delta_0$ be a small positive constant. There is a positive constant $\ep_0$
 such that (using Lemma \ref{lemma:AD3} iteratively) we can partition
 $A= B_0 \cup B_1 \cup \dots \cup B_h$, where $|B_0| \le
 \delta_0p$ and $|B_i| = \lfloor \ep_0 p \rfloor $ and each  $B_i$ ($1\le i \le h$)
 is contained in a hyperplane $D_i$.
Consider two cases:

 {\bf Case 1.} There are some $1 \le i \le h$ and $1 \le m \le \beta p/2$ such that
 $m^{\ast} B_i $ contains  $v+ D_i$, a translate of $D_i$.

  We can assume that $i=1$ and that $D_1$ is
 parallel to the subspace $H$ spanned by
 the basic vectors $e_1, \dots, e_{d-1}$.
 Let $A'$ be the projection of $A \backslash
 B_1$ into $H'$ spanned by $e_d$.

 Consider $A'$. First,  $A'$ is a sequence in $\F_p$ with $|A| -\lfloor \ep_0 p \rfloor$
 elements. Second, if there is no $1\le m \le \beta p$ such that $m^{\ast}A = \F_p^d$, then
  there is no $1\le m \le \beta p/2$ such that $m^{\ast} A' =
 \F_p$ by Fact \ref{fact:dimensionincrement}. The classification theorem for $\F_p$, Theorem \ref{theorem:classification:d=1}, implies that there
 is a subsequence $A^{''}$ of $A'$ with at most $\ep_0 p$ elements such that
 $A' \backslash A^{''}$ contains $M= O_{\ep_0}(1)$ different
 elements $a_1, \dots, a_M$. Indeed, one can set $A''$ to consist of $b^{-1} \cdot A^\flat$ and those elements $b^{-1} \cdot a$, where  $a \in A^{\sharp}$ and  $\|a\|\ge 2/\ep_0$.

  Consider the system of parallel hyperplanes $ a_1+H, \dots
 a_M+H$. Set $\ep:= \alpha/ 2M$. Partition $A \cap (a_i + H)$ into
 disjoint subsequences of size exactly $\lfloor \ep p \rfloor$ and a remainder sequence of
 size less than $\ep p$. Let $A_0$ be the union of the remainders
 and $B_0,B_1$ and $A''$. We have

 $$|A_0| \le M \ep p + |B_0| +|B_1|+|A''| \le \frac{\alpha}{2} p +
 \delta_0 p + \ep_0 p + \ep_0 p \le \alpha p. $$

  Furthermore, there is some $1 \le m \le \beta p/2$ such that
$m^{\ast} B_1 \subset m^{\ast} A_0$ contains a hyperplane parallel
to $H$. Finally, $A\backslash A_0$ are partitioned into sequences of size
exactly $\lfloor \ep p \rfloor$, each of which is contained in a translate of $H$.
This concludes the proof for the first case.

 \vskip2mm

  {\bf Case 2. } There is no $1\le i \le h$ and  $1 \le m \le \beta p/2$ such that
 $m^{\ast} B_i $ is a translate of $D_i$.

  In this case, we can apply the induction hypothesis to a translate of $B_i$ (which is
 contained in the subspace parallel with $D_i$) to obtain
 a decomposition  $B_i= B_{i0} \cup B_{i1} \cup \dots \cup
 B_{il_i}$, where $B_{ij}, 1\le j \le l_i$ are contained in translates of a subspace
 $H_i$ and there is an integer $m_i$ such that  $m_{i}^{\ast} B_{i0}$ contains a translate of
 $H_{i}$ (we choose the parameters to be small enough such that $|B_{i0}| \le \min(\alpha/(2h),\beta/(2h))$).
 Without loss of generality, one may assume that all $B_{ij}$ have the same size $\lfloor \ep p \rfloor$ with a small positive constant $\ep$.

  Now take $A_0 := B_0\cup (\cup_{i} B_{i0})$ and let $A_k$ ($1 \le k \le
 \sum_{i=1}^h l_i $) be the $B_{ij}$. By choosing $\delta_0$ small, we get $|A_0| \le \alpha p$.

  Note that $m_1^{\ast} B_{10} + \dots + m_j^{\ast} B_{h0}$, and thus $(m_1+\dots+m_h)^\ast A_0$, contains a translate of $H=H_1+\dots+H_h$.
 Each $A_k$ ($1 \le k$) is contained in a translate of $H$ since it is contained in a translate of $H_k$.




\section{Proof of Theorem \ref{theorem:Olson2}}\label{section:Olson2}

The idea  is to "project" the problem into $\F_p$ by using the characterization theorem. Once inside  this group,
we will be able to invoke  Theorem \ref{theorem:classification:d=1}. In fact, we will also need the following result.

\begin{theorem}[Erd\H{o}s-Heilbronn type inequality, \cite{SH}]\label{theorem:ErdosHeilbronn} Let $A$ be a non-empty subset of $\F_p$. Then $m^\ast A \ge \min\{p,m|A|-m^2+1\}.$ 
In particular, provided that $p$ is large enough, we have
 
\begin{itemize}
\item if $|A| \ge 2\sqrt{p}+1$, then $\lfloor \sqrt{p} \rfloor^\ast A=\F_p$;

\vskip .1in 

\item if $|A|\ge .51 p$, then $m^\ast A =\F_p$ for all $2\le m\le |A|-2$.
\end{itemize}
\end{theorem}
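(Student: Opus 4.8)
I would first invoke \cite{SH} for the basic inequality $|m^{\ast}A|\ge\min\{p,\,m|A|-m^2+1\}$ — the theorem of Dias da Silva and Hamidoune settling the Erd\H{o}s--Heilbronn conjecture — and, for a self-contained account, record the polynomial-method proof of Alon, Nathanson and Ruzsa. Write $k=|A|$ (assuming $m\le k$, the statement being trivial otherwise) and $t=\min\{p,\,mk-m^2+1\}$, and suppose for contradiction that $|m^{\ast}A|\le t-1$. Choose a set $E$ with $m^{\ast}A\subseteq E\subseteq\F_p$ and $|E|=t-1$, and form
$$
f(x_1,\dots,x_m)\;=\;\prod_{1\le i<j\le m}(x_i-x_j)\ \cdot\ \prod_{e\in E}\bigl(x_1+\cdots+x_m-e\bigr),
$$
a polynomial of degree $\binom{m}{2}+(t-1)$, to which I would apply Alon's Combinatorial Nullstellensatz with each of the $m$ variables ranging over the $k$-element set $A$.

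The Nullstellensatz requires a monomial $x_1^{t_1}\cdots x_m^{t_m}$ of $f$ with $\sum_it_i=\deg f$, with $t_i\le k-1$ for all $i$, and with coefficient nonzero in $\F_p$. Since $t-1\le mk-m^2$, one has $\deg f\le\binom m2+(mk-m^2)=\sum_{i=1}^m(k-i)$, so the exponents $t_i=k-i$ suffice when $t-1=mk-m^2$, while a one-coordinate downward correction (kept of size $<m$ by first passing to a subset of $A$) suffices when $t=p<mk-m^2+1$. As the chosen monomial has full degree, its coefficient in $f$ coincides with its coefficient in the leading form $(x_1+\cdots+x_m)^{t-1}\prod_{i<j}(x_i-x_j)$; expanding the Vandermonde determinant against the multinomial theorem, this coefficient equals, up to a nonzero scalar and a sign, the Hankel determinant $\det\bigl(1/(k+1-i-j)!\bigr)_{1\le i,j\le m}$ of reciprocal factorials (convention $1/n!=0$ for $n<0$), which evaluates to $\prod_{i=0}^{m-1}i!\ \big/\ \prod_{i=1}^{m}(k-i)!$ — a ratio of products of factorials of integers strictly below $p$, hence a unit of $\F_p$. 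The Nullstellensatz then produces $(a_1,\dots,a_m)\in A^m$ with $f(a_1,\dots,a_m)\ne0$: nonvanishing of the Vandermonde factor forces the $a_i$ pairwise distinct, and nonvanishing of the remaining factor forces $a_1+\cdots+a_m\notin E\supseteq m^{\ast}A$, contradicting $a_1+\cdots+a_m\in m^{\ast}A$.

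I would then read off the two ``in particular'' assertions by substitution. For the first, take $m=\lfloor\sqrt p\rfloor\in(\sqrt p-1,\sqrt p]$ and assume $|A|\ge2\sqrt p+1$; then $|A|>m$, so $m^{\ast}A\ne\emptyset$, and using $|A|-m\ge\sqrt p+1$,
$$
m|A|-m^2+1\;=\;m(|A|-m)+1\;\ge\;m(\sqrt p+1)+1\;>\;(\sqrt p-1)(\sqrt p+1)+1\;=\;p,
$$
so $\min\{p,\,m|A|-m^2+1\}=p$, giving $\lfloor\sqrt p\rfloor^{\ast}A=\F_p$. For the second, the complementation bijection $B\mapsto A\setminus B$ shows $m^{\ast}A=\bigl(\sum_{a\in A}a\bigr)-(|A|-m)^{\ast}A$, a translate of $(|A|-m)^{\ast}A$; replacing $m$ by $\min(m,|A|-m)$ we may assume $2\le m\le|A|/2$, on which range $m(|A|-m)$ is smallest at $m=2$, whence
$$
m|A|-m^2+1\;=\;m(|A|-m)+1\;\ge\;2(|A|-2)+1\;\ge\;2(0.51\,p-2)+1\;>\;p
$$
once $p$ is large, so $m^{\ast}A=\F_p$ for every $2\le m\le|A|-2$.

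The whole difficulty sits in the second paragraph: proving that the displayed monomial coefficient — equivalently that Hankel determinant of reciprocal factorials — does not vanish modulo $p$. Everything else, including the two corollaries, is bookkeeping; and if one is content to quote \cite{SH} for the basic inequality, there is no obstacle at all.
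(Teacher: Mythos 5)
Your proof is correct, but note that the paper does not prove this theorem at all: the inequality $|m^{\ast}A|\ge\min\{p,\,m|A|-m^2+1\}$ is imported as a black box from the cited source (the Dias da Silva--Hamidoune theorem), and the two itemized consequences are left as immediate substitutions. What you supply is the standard Alon--Nathanson--Ruzsa proof via the Combinatorial Nullstellensatz; your identification of the relevant coefficient with (a unit multiple of) $\prod_{i=0}^{m-1}i!\,\big/\prod_{i=1}^{m}(k-i)!$ is the correct computation, the suppressed scalar being $(t-1)!$ with $t-1\le p-1$, hence a unit of $\F_p$. Your derivations of the two bullets also match what the paper implicitly intends: for the first, $m(|A|-m)+1>(\sqrt p-1)(\sqrt p+1)+1=p$ forces the minimum to be $p$; for the second, the complementation identity $m^{\ast}A=\bigl(\sum_{a\in A}a\bigr)-(|A|-m)^{\ast}A$ reduces matters to $2\le m\le|A|/2$, where $m(|A|-m)+1\ge 2(0.51p-2)+1>p$ for large $p$. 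The one point you should spell out more fully is the case $t=p<m|A|-m^2+1$: after shrinking $A$ and lowering one exponent by $e<m$, you must check that the exponents remain distinct, nonnegative, at most $|A|-1$, and that their pairwise differences stay nonzero modulo $p$, so that the determinant evaluation is still a unit; this is routine but it is where the remaining bookkeeping actually lives. In short, your argument is a complete and correct substitute for the citation, which is more than the paper itself provides.
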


For more general results on $m-$incomplete sequences in $\F_p$, we refer the reader to \cite[Theorem 2.8]{NgV}.

\begin{fact}\label{fact:addinglines}
Let $B_1,B_2$ be subsets of the lines $x=b$ and $x=p-b$ of
$\F_p^2=\{(x,y|x,y\in \F_p)\}$ respectively. Assume that
$|B_1|+|B_2|>p$. Then $B_1+B_2$ contains the whole $y$-axis. In
particular, the set $B_1\cup B_2$ is not zero-sum-free in $\F_p^2$.
\end{fact}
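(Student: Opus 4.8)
The plan is to prove Fact \ref{fact:addinglines} by a direct application of the Cauchy–Davenport inequality after identifying the relevant lines with $\F_p$. The statement concerns two subsets $B_1, B_2$ lying on the lines $x=b$ and $x=p-b$ in $\F_p^2$, with $|B_1|+|B_2| > p$, and asserts that $B_1 + B_2$ contains the whole $y$-axis $\{(0,y) : y \in \F_p\}$.

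First I would record that every element of $B_1$ has the form $(b, u)$ with $u$ ranging over some set $U \subseteq \F_p$ of size $|B_1|$, and every element of $B_2$ has the form $(p-b, v) = (-b, v)$ with $v$ ranging over some $V \subseteq \F_p$ of size $|B_2|$. Then for $(b,u) \in B_1$ and $(-b,v) \in B_2$ we have $(b,u) + (-b,v) = (0, u+v)$, so the first coordinate is automatically $0$ and $B_1 + B_2 = \{(0, w) : w \in U + V\}$, where $U + V$ is the ordinary sumset in $\F_p$. Thus it suffices to show $U + V = \F_p$.

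Next I would invoke Cauchy–Davenport: for nonempty $U, V \subseteq \F_p$ one has $|U+V| \ge \min\{p, |U| + |V| - 1\}$. Since $|U| + |V| = |B_1| + |B_2| \ge p+1 > p \ge p$, we get $|U| + |V| - 1 \ge p$, hence $|U+V| \ge \min\{p, p\} = p$, so $U + V = \F_p$. (One should note $B_1, B_2$ are nonempty: if either were empty the hypothesis $|B_1|+|B_2| > p$ would force the other to have more than $p$ elements, impossible since a line contains only $p$ points; so in fact both are nonempty and even large.) This proves $B_1 + B_2 \supseteq$ the whole $y$-axis. For the final assertion, note $(0,0)$ lies on the $y$-axis, so $0 \in B_1 + B_2 = S_{B_1 \cup B_2}$ restricted to two-element subsequences with one element from each part — more precisely there exist $x_1 \in B_1$, $x_2 \in B_2$ with $x_1 + x_2 = 0$; these are distinct (they lie on distinct lines), so $\{x_1, x_2\}$ is a genuine two-element subset of $B_1 \cup B_2$ summing to $0$, whence $B_1 \cup B_2$ is not zero-sum-free.

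There is essentially no obstacle here: the only point requiring a moment's care is the parameter bookkeeping (confirming $|U|+|V|-1 \ge p$ from $|B_1|+|B_2| > p$, i.e. $\ge p+1$) and the observation that the two witnesses to $0 \in B_1+B_2$ are distinct points so that they form a legitimate subset rather than a multiset — but this is immediate because they have different first coordinates $b \ne p - b$ (valid since $p$ is an odd prime and $b \not\equiv p - b$, i.e. $2b \not\equiv 0$, which holds as long as $b \ne 0$; the case $b = 0$ would need separate mention but then both lines coincide and one simply needs $|B_1| + |B_2| > p$ to force an intersection-type argument, or one assumes $b \ne 0$ as is implicit in the usage).
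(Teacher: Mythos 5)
Your proof is correct. The paper states Fact \ref{fact:addinglines} with no proof at all, treating it as immediate, so there is no ``paper argument'' to match; your write-up simply supplies the missing details. Two small comments on the route you chose. First, invoking Cauchy--Davenport is more machinery than is needed here: in the regime $|U|+|V|>p$ the conclusion $U+V=\F_p$ is pure pigeonhole --- for any target $w\in\F_p$ the sets $U$ and $w-V$ have combined size exceeding $p$ and hence intersect --- and this is presumably the one-line argument the authors had in mind. (Of course Cauchy--Davenport gives the same conclusion, so nothing is wrong.) Second, your observation that the two witnesses to $0\in B_1+B_2$ lie on distinct lines, and therefore form a genuine two-element \emph{subset} of $B_1\cup B_2$ rather than a repeated element, is a legitimate point that the paper glosses over; it does require $b\neq p-b$, i.e.\ $b\neq 0$, as you note. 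In the only place the Fact is used (to rule out $|A_{-1}|\ge\sqrt p$ via the lines $x=\lfloor\sqrt p\rfloor$ and $x=-\lfloor\sqrt p\rfloor$, and later $x=1$ and $x=-1$), one has $b\neq 0$, so the degenerate case you flag does not arise in the application.
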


Now we are ready to prove Theorem \ref{theorem:Olson2}.
Let $\alpha$ and $\beta$ be sufficiently small constants. Assume
that $A$ is zero-sum-free. By Theorem \ref{theorem:characterization1}, there exists
$A_0\subset A$ of cardinality $|A_0|\le \alpha p$ such that $m^\ast
A_0$ contains a line of $\F_p^2$. Without loss of generality, we
assume that this line is parallel to the $y$-axis
${\mathcal{L}}:x=0$.

Let $\mathcal{L}'$ be the collection of points on the $x$-axis, then $\mathcal{L}\oplus \mathcal{L}'=\F_p^2$. Let $B:=\pi_{\mathcal{L}'}(A\backslash A_0)$ be
the projection of $A\backslash A_0$ into $\mathcal{L}'$, thus $B$ is a sequence in $\F_p$. Since $A$ is zero-sum-free, we have

\begin{itemize}
\item  $|A\cap {\mathcal{L}}| <\OL(\F_p) <\sqrt{2p}+1$ (see discussion prior to Theorem \ref{theorem:Olson2}),

\vskip .2in

\item $B$ is an incomplete sequence
in $\F_p$ (by Fact \ref{fact:dimensionincrement} and that $m^\ast
A_0$ contains a translate of ${\mathcal{L}}$).
\end{itemize}

Together with Theorem \ref{theorem:classification:d=1},
the last observation implies that, after an appropriate
dilation of $A$ in the direction of the $x$-axis $\mathcal{L'}$, $B$ contains at least $(1-2\alpha) p$ elements
of norm 1 (those $b\in B$ with $\|b\|=1$). (Note
that such a dilation does not affect the property of $A_0$, i.e. $m^\ast A_0$ still contains a vertical line.)

Let $A_{-1}:=\pi_{\mathcal{L}'}^{-1}(-1)\cap A$ and $A_1:=\pi_{\mathcal{L}}^{-1}(1)\cap A$ be the collections of elements of $A$ whose $x$-coordinate are $-1$ and $1$ respectively. As these elements correspond to the one of norm 1 in $B$, we have $|A_{-1}|+|A_{1}| \ge (1-2\alpha)p$.

Without loss of generality, assume that $|A_1|\ge
2\sqrt{p}+1$. It follows from the first part of Theorem \ref{theorem:ErdosHeilbronn} that
${\lfloor \sqrt{p} \rfloor}^\ast A_1$ contains the whole line $x=\lfloor
\sqrt{p} \rfloor$, and hence $|A_{-1}|<\sqrt{p}$ by Fact \ref{fact:addinglines}. In other words, $|A_1| \ge (1-2\alpha)p -\sqrt{p} \ge (1-3\alpha)p$.

To make the presentation less technical, we abuse the notation to write $A:=A_0\cup A_1 \cup
A_{-1} \cup A'$, where the new set $A_0$ is the intersection of $A$ with the $y$-axis, $A_0:=A\cap {\mathcal{L}}$, and $A':=A\backslash (A_0\cup A_1 \cup
A_{-1}).$

Now comes a crucial observation. Since $|A_1|\ge (1-3\alpha)p$, the second part of Theorem \ref{theorem:ErdosHeilbronn} implies that $l^{*}A_1$ covers the whole vertical line $x=l$, for every $2\le l \le |A_1|-2$. Thus the set $\bigcup_{2\le l \le
|A_1|-2} l^{*}A_1$ covers the whole strip $\{(x,y)\in \F_p^2: 2\le x \le |A_1|-2\}$.

The last conclusion immediately implies that $|A_{-1}|\le 1$, otherwise $2^\ast A_{-1}+2^\ast A_1$ would contain
${\mathcal{L}}$, and hence the origin, a contradiction.

We next focus on $A'$. Let $X:=\{x_1,\dots,x_{|A'|}\}=\pi_{\mathcal{L}'}(A')$ be the projection of $A'$ into the $x$-axis $\mathcal{L}'$. It follows that there does not exist any subset of $X$ whose sum belongs to the ''opposite" of $\{2,\dots,|A_1|-2\}$, i.e. we must have $S_X\subset \{-1,0, 1,\dots, p+1-|A_1|\}$ (in $\F_p$).

This partly implies that $x_i\in \{2,\dots,p+1-|A_1|\}$, but more importantly, viewing $x_i$ as real numbers, we must have

\begin{equation}\label{equation:Olson2:1}
x_1+\dots+x_{|A'|}\le p-|A_1|+1.
\end{equation}

Indeed, if there exists an $i$ such that $x_1+\dots+x_i\le p-|A_1|+1 <x_1+\dots+x_{i+1}$ then $p-|A_1|+2 \le x_1+\dots+x_i+x_{i+1} = (x_1+\dots+x_{i}) + x_{i+1} \le p-|A_1|+1+(p-|A_1|+1) \le 6\alpha p<p/2$. Thus, after taking modulo $p$, $x_1+\dots+x_i+x_{i+1}$ remains in $\{p-|A_1|+2,\dots, 6\alpha p\}$, which belongs to the ''opposite" of $\{2,\dots, |A_1|-2\}$ in $\F_p$, a contradiction.

Since $x_i\ge 2$, \eqref{equation:Olson2:1} implies that $|A'|$ is small, $2|A'|\le p-|A_1|+1$. From the definition of $A'$, using $|A_{-1}|\le 1$, we have $2|A|-p-3 \le 2|A_0|+|A_1|$. Inserting the bound of $|A|, |A|=p+\OL(\F_p)-2$, we obtain that $2|A_0|+|A_1| \ge p+2 \OL(\F_p)-7$.

Note that $|A_0|\le \OL(\F_p)-1$, the above result trivially implies that $|A_0|+|A_1|>p$, i.e.~$A_0+A_1$ covers the whole line
$x=1$. Hence $A_{-1}$ must be empty.

Using this new information (instead of
$|A_{-1}|\le 1$) and the upper bounds $|A_0|\le \OL(\F_p)-1, |A_1|\le p-1$, we easily obtain the lower bounds $|A_1|\ge p-3$ and $|A_0|\ge \OL(\F_p)-2$, and hence \eqref{equation:Olson2:1} implies that $x_{1}+\dots +x_{|A'|} \le 4$. Since $x_i\ge 2$, we must have $|A'|\le 2$. If $|A'|=0$, we are done. It remains to consider the following two cases.

{\bf Case 1}. $|A'|=2$. We then have $x_1=x_2 =2$. Thus, if $|A_1|=p-2$ and $|A_0|= \OL(\F_p)-2$, then $(p-4)^\ast A_1+x_1+x_2$ covers the whole $y$-axis $x=0$, contradiction. Furthermore, if $|A_1|=p-3$ and $|A_0|=\OL(\F_p)-1$, then $(p-4)^\ast A_1 + x_1+x_2$ covers $p-3$ elements of the $y$-axis, and hence $(p-4)^\ast A_1 + x_1+x_2 + A_0$ covers the whole axis, another contradiction.

\vskip .1in

{\bf Case 2}. $|A'|=1$. We then easily eliminate the case $|A_1|=p-1$ and $|A_0|= \OL(\F_p)-2$ because in this case, by \eqref{equation:Olson2:1}, we must have $x_1=2$, and hence $(p-2)^\ast A_1 +x_1$ covers the whole $y$-axis.

Assuming $|A_1|=p-2$ and $|A_0|= \OL(\F_p)-1$, it is implied that $x_1 \le 3 $ by \eqref{equation:Olson2:1}. But if $x_1=3$ then $(p-3)^\ast A_1+ x_1$ covers $p-2$ elements of the $y$-axis, and hence $(p-3)^\ast A_1 + x_1+ A_0$ covers the whole axis, a contradiction. As a consequence, $x_1=2$. The only reason that $A=A_0 \cup A_1 \cup A'$ is zero-sum-free in this case is that the multiset $A_0 \cup \{\sum_{a \in A_1 \cup A'} a\}$ is zero-sum-free in $\F_p$, completing the proof.

\section{Proof of Theorem \ref{theorem:Olson3}}\label{section:Olson3}

To establish Theorem \ref{theorem:Olson3}, we again rely on our characterization theorem to project back to $\F_p^2$. After this step, we are not working with sets anymore, but rather with sequences. For this reason, we need  the following statement about the "sequence"  counterpart of Olson's constant (which is usually referred to as the Davenport's constant).

\begin{theorem}[Davenport's constant for $\F_p^d$, \cite{O1}]\label{theorem:Davenportconstant}
Any collection of $d(p-1)+1$ elements of $\F_p^d$ is not zero-sum-free.
\end{theorem}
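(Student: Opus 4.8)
The plan is to derive this statement from the Chevalley--Warning theorem. Label the given collection $v_1,\dots,v_n \in \F_p^d$ with $n = d(p-1)+1$, and write each vector in coordinates as $v_j = (v_{j,1},\dots,v_{j,d})$. Introduce $n$ formal variables $x_1,\dots,x_n$ ranging over $\F_p$, and for $1 \le i \le d$ form the polynomial
$$f_i(x_1,\dots,x_n) := \sum_{j=1}^{n} v_{j,i}\, x_j^{p-1} \in \F_p[x_1,\dots,x_n].$$
Each $f_i$ has degree $p-1$, so $\sum_{i=1}^{d}\deg f_i = d(p-1) < d(p-1)+1 = n$; this is exactly the inequality needed to apply Chevalley--Warning, and it is the reason the threshold $d(p-1)+1$ (rather than anything smaller) shows up.

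First I would invoke Chevalley--Warning: since $\sum_i \deg f_i < n$, the number of common zeros of $f_1,\dots,f_d$ in $\F_p^n$ is divisible by $p$. The all-zero tuple is a common zero, so the zero set has size at least $p$, and in particular there is a common zero $x = (x_1,\dots,x_n) \ne 0$. Then I would translate this solution back into a subsequence: by Fermat's little theorem $x_j^{p-1}$ equals $1$ if $x_j \ne 0$ and $0$ if $x_j = 0$, so writing $B := \{\, j : x_j \ne 0 \,\}$ we obtain, for every coordinate $i$, $\sum_{j\in B} v_{j,i} = f_i(x) = 0$, that is, $\sum_{j\in B} v_j = 0$ in $\F_p^d$. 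The index set $B$ is nonempty because $x \ne 0$, so $(v_j)_{j\in B}$ is a nonempty zero-sum subsequence, and the collection is not zero-sum-free.

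I do not expect any genuine obstacle; the whole content is choosing the right polynomial system, after which everything is mechanical. The only point that needs a moment's care is the strict degree inequality, which forces the exponent $p-1$ (so that the map $x_j \mapsto x_j^{p-1}$ serves as a support indicator) while keeping the total degree just below $n$. If one prefers to avoid quoting Chevalley--Warning, essentially the same argument can be run through a more hands-on polynomial-method computation, or through Olson's original group-ring argument; the Chevalley--Warning route is the shortest. For completeness one may also record the matching lower bound: the collection consisting of $p-1$ copies of each standard basis vector $e_1,\dots,e_d$ has exactly $d(p-1)$ elements, and every nonempty subsequence sum equals $\sum_i a_i e_i$ with integers $0 \le a_i \le p-1$ not all zero, hence is nonzero in $\F_p^d$; this shows $d(p-1)+1$ is best possible.
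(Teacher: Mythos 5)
Your proof is correct. Note, though, that the paper does not prove this statement at all: it imports it verbatim from Olson's 1969 paper as a black box, so there is no internal argument to compare against. Your Chevalley--Warning derivation is the standard short, self-contained proof for the elementary abelian case, and every step checks out: the degree count $\sum_i \deg f_i \le d(p-1) < n$ is exactly what Chevalley--Warning needs, the all-zero tuple forces at least $p$ common zeros and hence a nonzero one, and Fermat's little theorem converts the nonzero solution into a nonempty zero-sum subsequence indexed by its support. (The only cosmetic quibble is that $\deg f_i$ is \emph{at most} $p-1$, since some $f_i$ could vanish identically, but the inequality still holds.) Olson's original argument in the cited source is different in character --- a group-ring computation that determines the Davenport constant for arbitrary finite abelian $p$-groups, not just $\F_p^d$ --- so your route trades that generality for brevity, which is entirely appropriate here since the paper only uses the $\F_p^d$ case. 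Your closing remark on the matching lower bound via $p-1$ copies of each standard basis vector is also correct, though not needed for the statement.
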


Now we present the proof of Theorem \ref{theorem:Olson3}. Assume that there exists a set $A$ of size $(2+\gamma)p$ which is zero-sum-free. By Theorem \ref{theorem:characterization1} (after a bijective  linear mapping) we can partition $A$ into disjoint sequences, $A=A_0\cup A_1\dots\cup
 A_n$ where

 \begin{itemize}
 \item $|A_0|\le \gamma p/4$;

 \item $|A_i|= \lfloor \ep p \rfloor$;

 \item $m^\ast A_0$ contains a translate of a subspace $H$, for some $m\le |A_0|$;

 \item there exist $n \le n(\gamma)$ vectors $a_1,\dots,a_n\in \F_p^3$ such that $A_i \subset a_i + H$.

 \end{itemize}

Let $d'$ be the dimension of $H$. We observe that $d'$ can not be either 0 or 3, since the first case would imply that $A$ contains elements of multiplicity $\lfloor \ep p \rfloor >1$ (as $\ep$ is independent of $p$), while the second  would imply that $A$ is complete. We consider two remaining  cases.

{\bf Case 1.} $d'=2$. Without loss of generality, we assume that $H=\{z=0\}$. Consider the projection $B$ of $A\backslash A_0$ onto the $z$-axis, which can be viewed as a sequence in $\F_p$. Since $A$ is zero-sum-free, $|A\cap H|<\OL(\F_p^2) \le (1+\gamma/4) p$. Thus there are at least $|A|-|A_0|- (1+\gamma/4)p \ge (1+\gamma/2)p$ elements of $B$ having non-zero norm.

By item 2 of Theorem \ref{theorem:classification:d=1}, the latter implies that $B$ is complete in $\F_p$ (in fact, it is easy to see that
any sequence of $p$ non-zero elements of $\F_p$ is complete). Hence $S_{A\backslash A_0}+m^\ast A_0=\F_p^3$, in particular the origin, a contradiction.

{\bf Case 2.} $d'=1$. Without loss of generality, we assume that $H$ is the $z$-axis $\{x=0,y=0\}$. By the property of $A_i$, we may write $A_i\subset v_i+H$, where $v_i$ is the projection of $a_i$ onto the $xy$-plane $\{z=0\}$.

Consider the sequence $\{v_1,\dots,v_n\}$, where each $v_i$ has multiplicity $|A_i|-\lfloor 2/\ep \rfloor$. Since $\sum_{i=1}^n (|A_i|-2/\ep) \ge (2+\gamma/2)p$, Theorem \ref{theorem:Davenportconstant} implies that there exist $0\le m_i\le  |A_i|-2/\ep$ such that $\sum_{i=1}^n m_iv_i = 0$ (in the $xy$-plane), where not all $m_i$ are zero. By multiplying all $m_i$ by $\lfloor 2/\ep \rfloor$ if needed, we may assume that at least one of the $m_i$ is greater than $\lfloor 2/\ep \rfloor $. Let this be $m_1$.

We now consider the sumset $\sum_{i=1}^n m_i^\ast A_i$. By the definition of $m_i$ and $v_i$, the projection of this set into the $xy$-plane is the origin, in other words, $\sum_{i=1}^n m_i^\ast A_i$ belong to the $z$-axis.

On the other hand, since $|A_1|=\lfloor \ep p \rfloor$ and $\lfloor 2/\ep \rfloor \le m_1 \le |A_1|-2/\ep$, Theorem \ref{theorem:ErdosHeilbronn} implies that $m_1^\ast A_1$ contains a translate of $H$, i.~e.~, $m_1^\ast A_1$ contains the whole line which has image $m_1v_1$ in the $xy$-plane.

It follows  that $\sum_{i=1}^n m_i^\ast A_i$ covers the whole $z$-axis, and hence the origin, a contradiction.

{\bf Acknowledgements.} The authors would like to thank the referees for carefully reading this manuscript and providing very helpful remarks.

\end{document}